\documentclass[11pt]{article}
\usepackage{amssymb,amsmath,amsthm,bm}
\usepackage[colorlinks=true, urlcolor=blue,linkcolor=blue, citecolor=blue]{hyperref}
\usepackage{caption}
\usepackage{graphicx, float}
\usepackage{color, mathtools, tikz, xcolor}
\usepackage{enumitem}
	\setlist[enumerate]{noitemsep,nolistsep}
\usepackage[labelformat=simple]{subcaption}

\usetikzlibrary{decorations.pathreplacing,arrows,calc}
\usetikzlibrary{shapes.geometric}
\usetikzlibrary{decorations.markings}
\usepackage{bbm}
\usepackage[mathlines]{lineno}

\usepackage[UKenglish]{babel}
\usepackage[UKenglish]{isodate}

\usepackage{geometry}
\numberwithin{equation}{section}

\numberwithin{figure}{section}

	\newtheorem{theorem}{Theorem}[section]
	
	\newtheorem{conjecture}[theorem]{Conjecture}
	
	\newtheorem{claim}[theorem]{Claim}
	\newtheorem{proposition}[theorem]{Proposition}
	\newtheorem{lemma}[theorem]{Lemma}

\newenvironment{proofclaim}[1][Proof of claim]{\begin{proof}[#1]}{\end{proof}}

\def\al#1{}
	\renewcommand{\al}[1]{\footnote{\textbf{AL: }#1}}

\newcommand\pd{\bar{\delta}^0}

\date{\today}

\begin{document}

\tikzset{->-/.style={
		decoration={markings, mark=at position 0.6 with {\arrow{latex}}},postaction={decorate}}}

\title{Long antipaths in oriented graphs}

\author{Yuping Gao\footnote{School of Mathematics and Statistics, Lanzhou University, Lanzhou 730000, China.
			Email: {\tt gaoyp@lzu.edu.cn}.
			Supported in part by  NSFC grant, No. 12271228 and China Scholarship Council, No. 202406180027.}
		\qquad
		Allan Lo\footnote{School of Mathematics, University of Birmingham, Birmingham, B15 2TT, UK. Email: {\tt s.a.lo@bham.ac.uk}.}
	}

\maketitle
\begin{abstract}
An antidirected path is an oriented path in which every vertex sees either just incoming or just outgoing edges.
We prove that every oriented graph with minimum semidegree at least $k$ contains an antidirected path of length $2 k -1$.
This confirms a conjecture of Stein.
	
\medskip

\noindent {\textbf{Keywords}: Antidirected path; minimum semidegree; oriented graph}
\end{abstract}

\section{Introduction}

A renowned result of Dirac~\cite{D1952} states that every graph~$G$ on $n \ge 3$ vertices with $\delta(G) \ge n/2$ contains a Hamilton cycle.
This problem naturally extended to oriented graphs and digraphs with large minimum semidegree.
An \emph{oriented graph} is a loopless directed graph without cycle of length~$2$.
The \emph{minimum semidegree~$\delta^0(G)$ of a digraph~$G$} is the minimum value amongst the in-degree and out-degree of all its vertices.
Keevash, K\"uhn and Osthus~\cite{MR2472138} proved that every large oriented graph~$G$ on $n$ vertices with $\delta^0(G) \ge (3n-4)/8$ contains a directed Hamilton cycle.
See \cite{MR2889513} for a survey on Hamilton cycles in digraphs and oriented graphs.

What is the longest oriented path in an oriented graph?
There are two natural oriented paths, directed and antidirected.
A \emph{directed path} is an oriented path in which all its edges are directed in the same direction.
On the other hand, an \emph{antidirected path}, or an \emph{antipath} for short,  is an oriented path in which each of its vertices has in-degree~$0$ or out-degree~$0$.
Observe that a directed path does not contain an antipath of length~$2$ and vice versa.
The \emph{length} of a path is the number of edges it contains.

Oriented paths in \emph{tournaments} (oriented complete graphs) are well studied.
A classical result of R\'edei~\cite{Redei} states that every tournament contains a directed Hamilton path.
Gr\"unbaum~\cite{Grunbaum} proved that all but three tournaments contain a Hamilton antipath.
Rosenfeld~\cite{Rosenfeld} later conjectured that every tournament on at least $8$ vertices contains all oriented Hamilton paths.
Thomason~\cite{Thomason} proved the conjecture holds for tournaments on at least $2^{128}$ vertices.
The conjecture was finally settled by Havet and Thomass\'e~\cite{HavetThomasse}.

For oriented graphs~$G$, Jackson~\cite{J1981} proved that there is a directed path of length~$2\delta^{0}(G)$ or a directed Hamilton cycle.
Motivated by this result, Stein~\cite{S2020} proposed the following minimum semidegree condition forcing an oriented graph to contain any given orientation of a path, which can also be viewed as an analogue of Rosenfeld's conjecture for oriented graphs.

\begin{conjecture}[Stein~\cite{S2020}]\label{conj:Stein}
Every oriented graph~$G$ contains all oriented paths of length~$2\delta^{0}(G)-1$.
\end{conjecture}

If the conjecture holds, then it is best possible by considering (disjoint union of) any regular tournaments on $2\delta^{0}(G)-1$ vertices.
There are various results supporting this conjecture.
The directed path case is proved by Jackson~\cite{J1981}.
Chen, Hou, and Zhou~\cite{CHZ2025-EJC} proved the conjecture for oriented paths with one change in direction.
For arbitrary orientations, Kelly~\cite{Kelly} proved the case when $\delta^0(G) \ge 3n/8+ o(n)$ with $n$ large, and Stein and Trujillo-Negrete~\cite{MR5089877} proved the case when $G$ contains no oriented $4$-cycles.

In this paper, we investigate Conjecture~\ref{conj:Stein} for antipaths.
The \emph{minimum pseudo-semidegree~$\pd(G)$} of a non-empty digraph~$G$ is the minimum value amongst all non-zero in-degree and out-degree of all its vertices.
In other words, the minimum pseudo-semidegree is the maximum $k$ such that each vertex has either out-degree~$0$ or out-degree at least~$k$, and has either in-degree~$0$ or in-degree at least~$k$.
If $G$ is empty, then $\pd(G) = 0$.
Clearly, $\pd(G) \ge \delta^0(G) $.
Stein~\cite{S2023} proposed the following conjecture in terms of the minimum pseudo-semidegree condition.

\begin{conjecture}[Stein~\cite{S2023}]\label{conj:pseudo}
Every oriented graph~$G$ with $\pd(G) \ge k$ contains an antipath of length~$2k-1$.
\end{conjecture}

Note that up to isomorphism there is only one antipath of odd length (and two for even length).
Clearly, Conjecture~\ref{conj:pseudo} implies Conjecture~\ref{conj:Stein} for antipaths.
In fact, the converse is true, by an observation of Stein and Z\'{a}rate-Guer\'{e}n (c.f.~\cite[Proof of Lemma 7.2]{SZ2024}).
It is easy to see that Conjecture~\ref{conj:pseudo} holds for $k \le 2$.
When $\delta^0(G)$ is linear in~$n$, Stein and Z\'{a}rate-Guer\'{e}n~\cite{SZ2024} showed that Conjecture~\ref{conj:pseudo} holds asymptotically, that is, when $k = \eta n$ and $\delta^0(G) \ge k + \varepsilon n $.
Klimo\v{s}ov\'{a} and Stein~\cite{KS2023} proved that Conjecture~\ref{conj:pseudo} holds with $\pd(G) \ge (6k-5)/4$ instead.
The lower bound of $\pd(G)$ was later improved by Chen, Hou, and Zhou~\cite{CHZ2025} to $(4k-1)/3$, and by Skokan and Tyomkyn~\cite{ST2025} to $(10k-5)/8$.
Recently, Grzesik and Skrzypczyk~\cite{GS2025} further improved the bound to $\pd(G) > k-1+\sqrt{2k-4}/2$.

In this paper, we confirm Conjecture~\ref{conj:pseudo} (and so Conjecture~\ref{conj:Stein}  for antipaths).

\begin{theorem}\label{thm:pseudo}
Every oriented graph~$G$ with $\pd(G) \ge k$ contains an antipath of length~$2k-1$.
\end{theorem}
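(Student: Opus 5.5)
We argue by induction on $k$, in the pseudo-semidegree form, and within each $k$ by a maximal-counterexample analysis. The cases $k\le 2$ are easy, so fix $k\ge 3$. Suppose $G$ is a counterexample with $\pd(G)\ge k$. Deleting edges can only help us, provided $\pd$ stays at least $k$, so we may assume $G$ is \emph{edge-minimal}: deleting any edge $uv$ creates a vertex whose nonzero in- or out-degree drops below $k$. Hence $d^+(u)=k$ or $d^-(v)=k$ for every edge $uv$.

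Next we reformulate. Split each vertex $v$ into a ``source copy'' $v^+$, carrying the out-edges, and a ``sink copy'' $v^-$, carrying the in-edges. This gives a bipartite graph $B$ with parts $V^+$ and $V^-$, in which every non-isolated vertex has degree at least $k$. An antipath in $G$ is exactly a path in $B$ that never uses both $v^+$ and $v^-$ for the same $v$. The orientedness of $G$ means that $u^+v^-$ and $v^+u^-$ are never both edges of $B$. So we must find a path with $2k$ vertices in $B$ that avoids the forbidden pairs $\{v^+,v^-\}$.

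Take a longest such admissible path $P=x_0x_1\dots x_\ell$ in $B$, and suppose $\ell\le 2k-2$. Every neighbour of the endpoint $x_\ell$ is either on $P$ or is the twin of a vertex on $P$. Otherwise we could extend $P$, since a neighbour $y$ that is neither on $P$ nor a twin of a vertex on $P$ gives an admissible extension. Since $x_\ell$ has at least $k$ neighbours, and $P$ contains at most $k-1$ vertices of the opposite part, at least one neighbour must be the twin of a vertex of $P$ in the same part as $x_\ell$. This forces a structure to exploit. We will run the standard P\'osa/Jackson-type rotation argument. If $x_\ell\sim x_i$, then $x_0\dots x_i x_\ell x_{\ell-1}\dots x_{i+1}$ is again admissible, so the set of possible endpoints is closed under rotation. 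We will combine this with \emph{twin-swaps}: if $x_\ell\sim \overline{x_j}$ (the twin of $x_j$) and $x_j$ is near an end, we may delete a short segment and reroute. The goal is to show that the endpoint set $E$ obtained by rotations is large, and that the union of neighbourhoods of $E$ lies in $V(P)\cup\overline{V(P)}$. We will then count edges between $E$ and $V(P)\cup\overline{V(P)}$ in both directions. The oriented condition means $x_i$ and $\overline{x_i}$ together contribute at most one edge to any given pair $\{u,\overline{u}\}$. Combining this with the degree bound $k$ should yield a contradiction once $\ell\le 2k-2$, exactly as Jackson's count gives $2\delta^0$ for directed paths.

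The edge-minimality serves to handle the case where the endpoints' neighbourhoods concentrate on twins. An edge $uv$ with both $d^+(u)>k$ and $d^-(v)>k$ is not present, so tight vertices abound. Such vertices let us pass to $G-v$ or to a contraction and apply induction with $k-1$, obtaining a path of length $2k-3$ that we then extend by two using the removed vertex.

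\textbf{The main obstacle} is the counting step: bounding the contribution of twin-neighbours. A vertex $x_\ell$ may spend many of its $k$ edges on twins $\overline{x_j}$, which are not on $P$ and so do not directly help rotation. The orientedness (no $2$-cycles) is the only tool controlling this, and a naive count loses a constant. We would first try to pair each twin-neighbour $\overline{x_j}$ with the rotation endpoint $x_{j\pm1}$ and show injectivity, giving $|N(E)|\ge k+|E|-1$ over $2k-1$ slots.
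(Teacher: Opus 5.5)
\textbf{There is a genuine gap: your outline stops exactly where the proof has to happen.}

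The split bipartite graph $B$ and the rotation closure are correct. But they only reproduce trivial information. Every rotation endpoint $e$ has its at least $k$ neighbours among at most $\ell\le 2k-2$ slots: the path vertices on the other side, and the twins of path vertices on its own side. That alone gives only $\ell\ge k$.

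The only constraint you propose for the twin-neighbours is orientedness. It says the twin edges among the source vertices of $P$ form an oriented graph. Suppose every source vertex of $P$ were a rotation endpoint. Orientedness then bounds the average number of twin-neighbours only by about $\ell/4$. So a count of the kind you describe cannot do better than $k\lesssim 3\ell/4$, i.e.\ $\ell\gtrsim 4k/3$. That is a loss of a constant \emph{factor}, not an additive constant.

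The specific injection you suggest does not close this either. The bound $|N(E)|\ge k+|E|-1$ is compatible with $N(E)$ lying in $2k-1$ slots, because $|E|\le\lfloor \ell/2\rfloor+1\le k$.

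The induction step is unsupported. It is true that $\pd(G-v)\ge k-1$. But nothing lets you attach two further vertices to the ends of the $(2k-3)$-antipath supplied by induction; that is the original problem again. Edge-minimality is never actually used.

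You also do not treat the case where the endpoint closes an admissible cycle ($x_\ell\sim x_0$). The paper handles it with the Klimo\v{s}ov\'{a}--Stein lemma, which turns an anticycle of length $2\ell+2$ into an antipath of that length.

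\textbf{What is missing.} You need a mechanism forcing each twin edge to be paid for. A twin edge is an edge from an end vertex to a source vertex $v_{2i}$ of the path. It must be paid for by a missing edge to the neighbouring sink $v_{2i+1}$, so that each pair $\{v_{2i},v_{2i+1}\}$ receives on average at most one edge per end vertex.

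The paper obtains this in three steps.
\begin{enumerate}
\item It uses that a longest antipath has odd length $2\ell+1$.
\item It proves that \emph{both} ends can be blown up into independent sets $V_0,V_{2\ell+1}$ of size at least $2$ (Lemma~\ref{lem:twofatends}). This is the hard part, done via maximality of $|V_0|$.
\item It exploits the two interchangeable end vertices through the explicit re-routings of Lemma~\ref{lem:property}. These show that a $V_0$-surplus pair receives exactly $|V_0|+1$ edges, that no two consecutive pairs are $V_0$-surplus, and that a surplus pair forbids specific nearby edges. Combining both ends, Claim~\ref{clm:surplus} shows that every surplus pair is compensated by a deficient neighbouring pair.
\end{enumerate}
Summing gives $k(|V_0|+|V_{2\ell+1}|)\le(\ell+1)(|V_0|+|V_{2\ell+1}|)$, hence $\ell\ge k-1$.

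Single-endpoint rotations have no analogue of these two-vertex re-routings. So your plan as stated does not reach length $2k-1$.
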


The next proposition shows that Theorem~\ref{thm:pseudo} is best possible as one cannot seek an antipath nor anticycle of length~$2k$.
Here, an \emph{anticycle} is a cycle (of even length) in which each of its vertices has in-degree~$0$ or out-degree~$0$.

\begin{proposition}
For each $k \in \mathbb{N}$, there exists an oriented graph~$G$ with $\delta^0(G) = \pd(G) =k$ without an antipath or anticycle of length~$2k$.
\end{proposition}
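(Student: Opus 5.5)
The plan is to give an explicit construction and then rule out long antipaths and anticycles by a structural counting argument. The tool throughout is the source/sink structure of an antipath or anticycle: its vertices alternate between \emph{sources}, which use two out-edges, and \emph{sinks}, which use two in-edges. Two consecutive sinks must share a common source, and two consecutive sources must share a common sink. So an antipath of length $2k$ has $2k+1$ vertices, split as $k+1$ and $k$ between the two roles, and an anticycle of length $2k$ has exactly $k$ sources and $k$ sinks. The goal is a graph where these shared-neighbour requirements cannot be met $k$ times in a row.

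I would first check the obvious candidates and record why they fail. The blow-up of the directed triangle, with parts $V_1\to V_2\to V_3\to V_1$ each of size $k$, has $\delta^0=\pd=k$. Every antipath in it lives in a single pair $V_i\cup V_{i+1}$, because a source in $V_i$ has out-neighbours only in $V_{i+1}$ and a sink in $V_{i+1}$ has in-neighbours only in $V_i$. So every antipath has length at most $2k-1$. However, $K_{k,k}$ contains an anticycle of length $2k$, so this candidate fails for anticycles. A general regular tournament on $2k+1$ vertices fails for antipaths by Gr\"unbaum's theorem, apart from the three exceptions. The rotational tournament on $\mathbb Z_{2k+1}$, with $i\to i+1,\dots,i+k$, works for $k\le 2$. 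For $k=2$, out- and in-neighbourhoods are intervals of length $2$, so two distinct sources share at most one sink, and one checks directly that neither antipattern of length $4$ (sinks $j,j+1$ forcing a repeated source, or sources $j,j+1$ forcing a repeated sink) nor a $4$-anticycle exists. The case $k=1$ is simply $\vec C_3$.

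For general $k$ the construction I would pursue is a hybrid. It keeps the localisation of the triangle blow-up, which confines each antipath to a bipartite "layer", but replaces each complete bipartite layer by an interval-type pattern in which distinct sources share few sinks, as in the rotational $5$-tournament. Concretely, I would let each part carry a cyclic order and direct $x\in V_i$ to the $k$ vertices of $V_{i+1}$ in a cyclic interval starting at $x$. This gives $\delta^0=\pd=k$ provided each part has size at least $k$. The step-by-step check is then as follows.
\begin{enumerate}
\item Verify the semidegrees: each vertex has out- and in-degree exactly $k$.
\item Show that any antipath or anticycle projects into one layer $V_i\cup V_{i+1}$.
\item Within a layer, encode the antipath as a sequence of sources whose intervals pairwise overlap consecutively. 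Then bound the number of new vertices the sequence can visit, using the fact that the sinks are confined to the union of the intervals. The target is to force a repeated vertex once the length reaches $2k$.
\end{enumerate}

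The main obstacle is step 3: getting the interval sizes and the part sizes to balance. The parts must be large enough to give semidegree $k$, yet each layer must not contain a Hamilton-like antipath or a $C_{2k}$-anticycle. If the cyclic interval pattern admits such a path, which is plausible since sliding intervals tend to support long zig-zag paths, I would fall back to blowing up the $k=2$ rotational tournament on $5$ vertices, or to taking a disjoint union of small extremal pieces. I would then redo the projection argument, using the $k=2$ verification as the base pattern.
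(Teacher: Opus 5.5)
There is a genuine gap. For $k\ge 3$ you never produce a graph, and the layered construction you propose cannot be rescued by any choice of intervals or part sizes. In any graph of your type, every out-neighbour of $x\in V_i$ lies in $V_{i+1}$ and every in-neighbour of $y\in V_{i+1}$ lies in $V_i$. So the layer $V_i\cup V_{i+1}$ is a bipartite graph $H$ in which every vertex has degree at least $k$. Since all its edges point from $V_i$ to $V_{i+1}$, every path or cycle of $H$ is an antipath or anticycle of $G$. Now take a longest path $u_0u_1\dots u_m$ in $H$. All neighbours of $u_0$ lie on it, at odd positions, so $u_0u_j\in E(H)$ for some $j\ge 2k-1\ge 3$ (for $k\ge 2$). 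Then $u_0u_1\dots u_ju_0$ is an anticycle of length $j+1\ge 2k$: either it has length exactly $2k$, or it contains an antipath of length $2k$. Concretely, with cyclic intervals on $\mathbb{Z}_n$ and $n\ge k+1$, the path $A_0B_1A_1B_2\cdots B_kA_k$ is already an antipath of length $2k$; and $n=k$ gives $K_{k,k}$. So step 3 is not a balancing problem; it is false.

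Your fallbacks also fail. A disjoint union has semidegree equal to the minimum over its pieces, so it cannot lift small examples to larger $k$. Blowing up the rotational $5$-tournament fails too: replace each vertex by two, so $k=4$, and write the parts as $P_i=\{s_i,t_i\}$. Then $s_0t_2s_1t_3s_2t_4s_3t_0s_4t_1s_0$ is an anticycle of length $10$, so it contains an antipath of length $8=2k$.

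What the obstruction shows is that vertices must get part of their degree from edges \emph{inside} a part, so that the region an antipath can stay in is too small. This is what the paper does for $k\ge 3$. It takes parts $V_1,V_2,V_3,V_4$ of sizes $\lceil (k+1)/2\rceil,k,\lceil (k+1)/2\rceil,k$, puts all edges $V_i\to V_{i+1}$ (indices mod $4$), and places a tournament $T$ with $\delta^0(T)=\lfloor (k-1)/2\rfloor$ inside each of $V_2$ and $V_4$. This gives $\delta^0=\pd=k$.

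The paper then argues as follows:
\begin{enumerate}
\item Any oriented path between $V_2$ and $V_4$ passes through $V_1$ or $V_3$, so it contains a directed path of length $2$.
\item Hence an antipath or anticycle of length $2k$ lies, without loss of generality, in $V_1\cup V_2\cup V_3$, which has only $2k+1$ or $2k+2$ vertices.
\item So it uses at least as many vertices of the independent set $V_1\cup V_3$ as of $V_2$, and meets both $V_1$ and $V_3$.
\item Therefore it alternates between $V_1\cup V_3$ and $V_2$, and somewhere contains $V_1\to V_2\to V_3$, a directed path of length $2$.
\end{enumerate}
Your treatment of $k=1$ and $k=2$ ($\vec C_3$ and the regular $5$-tournament) matches the paper.
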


\begin{proof}
For $k = 1$ and $k=2$, let $G$ be a directed cycle and a regular tournament on $5$ vertices, respectively.
It is easy to check that $G$ is the desired oriented graph.

Now suppose that $k\ge 3$.
Let $T$ be a tournament on $k$ vertices with $\delta^0(T) = \lfloor (k-1)/2 \rfloor$.
(If $k$ is odd, then $T$ is a regular tournament.
If $k$ is even, then $T$ can be obtained by deleting one vertex from a regular tournament on $k+1$ vertices.)
We construct an oriented graph~$G$ on~$2\lceil (3k+1)/2 \rceil$ vertices as follows.
The vertex set can be partitioned into $V_1,V_2,V_3,V_4$ of sizes $\lceil (k+1)/2 \rceil,k,\lceil (k+1)/2 \rceil,k$, respectively.
Each of~$V_2$ and~$V_4$ induces a copy of~$T$.
Also, $G$ contains all edges from~$V_i$ to~$V_{i+1}$, where we take $V_5=V_1$, and no other edges.
Note that $\delta^0(G)=\pd(G) = \min \{ k , \lfloor (k-1)/2 \rfloor +\lceil (k+1)/2 \rceil \}= k$.

Suppose to the contrary that $G$ contains an antipath or anticycle of length~$2k$.
Since any oriented path between $V_2$ and $V_4$ contains a directed path of length~$2$, we may assume without loss of generality that such a copy is contained entirely in~$V_1 \cup V_2 \cup V_3$.
First suppose that $G$ contains an antipath~$P$ of length~$2k$.
If $k$ is odd, then $V(P) = V_1 \cup V_2 \cup V_3$.
Otherwise, $V(P)$ contains all but one vertex in~$V_1 \cup V_2 \cup V_3$.
In both cases, we have $|V(P) \cap (V_1 \cup V_3)| \ge |V(P) \cap V_2|$ and that $P$ contains at least one vertex from $V_1$ and one from~$V_3$.
Since $V_1 \cup V_3$ is an independent set, $P$ must alternate between $V_1 \cup V_3$ and~$V_2$.
Hence, there exist three consecutive vertices in~$P$ each from a distinct~$V_i$, which induce in~$P$ a directed path of length~$2$, a contradiction.
The case when $G$ contains an anticycle of length~$2k$ is proved similarly.
\end{proof}

\subsection{Notation}

For integers $p$ and $q$, let  $[p,q] = \{i \in \mathbb{Z} \colon p \le i \le q \}$. In particular, we write $[q]$ for $[1,q]$.

Let $G$ be a digraph.
For two vertices $u,v\in V(G)$, we write $uv\in E(G)$ if the edge is directed from $u$ to~$v$, and in this case we call $v$ an \emph{out-neighbour} of~$u$ and $u$ an \emph{in-neighbour} of~$v$.
The \emph{in-neighbourhood} and~\emph{out-neighbourhood} of~$v$, denoted by $N^-(v)$ and $N^+(v)$, are the sets of in-neighbours and out-neighbours of~$v$, respectively.
The \emph{in-degree} of~$v$, denoted by~$d^-(v)$, is the number of in-neighbours of~$v$, while the \emph{out-degree} of~$v$, denoted by~$d^+(v)$, is the number of out-neighbours of~$v$.

For a subset~$X\subseteq V(G)$, we write $N^+(X)=\bigcup_{x \in X} N^+(x)$.
For disjoint $U,V \subseteq V(G)$, let $e(U,V)$ denote the number of edges from~$U$ to~$V$.
We simply write $v$ for $\{v\}$.
For $v \in V(G)$ and $U\subseteq V(G)$, we write $N^+(v,U) = N^+(v) \cap U$ and $d^+(v,U) = |N^+(v,U)|$, and we define $N^-(v,U)$ and $d^-(v,U)$ analogously.
The \emph{reverse of~$G$} is the oriented graph obtained from $G$ by reversing all its edge orientations.

\subsection{Proof sketch  of Theorem~\ref{thm:pseudo}}

We now sketch the proof of Theorem~\ref{thm:pseudo}.
Let $G$ be an oriented graph with $\pd(G) \ge k$ with no antipath of length~$2k-1$.
Let $P = v_0 v_1 \dots v_m$ be a longest antipath in~$G$ with $v_0v_1 \in E(G)$.
By a result of Klimo\v{s}ov\'{a} and Stein~\cite{KS2023} (see Lemma~\ref{lem:oddlength}), we have $m = 2\ell +1$.
We now consider a particular ``blow-up'' of~$P$ by replacing $v_0$ with an independent set~$V_0$,  and adding all edges from~$V_0$ to~$v_1$ (see Section~\ref{sec:lemmas} for the formal definition).
This notion of antipath blow-up was already considered by Grzesik and Skrzypczyk~\cite{GS2025}.
More importantly, they showed that if $|V_0| \ge 2$, then  $e(V_0, \{ v_{2i},v_{2i+1},v_{2i+2},v_{2i+3}\} ) \le 4|V_0|+1$ for all $i \in [\ell-2]$.
This played a crucial role in their proof.
In this paper, we show that one can also replace~$v_{2\ell+1}$ with an independent set~$V_{2\ell+1}$ of size~$|V_{2\ell+1}| \ge 2$ and add all edges from~$v_{2\ell}$ to~$V_{2\ell+1}$.
Furthermore, we then have ``on average''
\begin{align*}
	e(V_0, \{ v_{2i},v_{2i+1} \}) + e( \{ v_{2i},v_{2i+1} \}, V_{2\ell+1} ) \le |V_0|+|V_{2\ell+1}|
\end{align*}
for all $i \in [\ell-1]$ (see Claim~\ref{clm:surplus}).
This then gives us an upper bound on  $e(V_0, \{v_j \colon j \in [2\ell]\}) + e( \{v_j \colon j \in [2\ell]\}, V_{2\ell+1} )$, whilst $\pd(G)$ provides a lower bound.
The most involving part of our proof is actually finding such an antipath blow-up with $|V_0|,|V_{2\ell+1}| \ge 2$ (see Lemma~\ref{lem:twofatends} and it is proved in Section~\ref{sec:lemmaproof}).

\section{Auxiliary lemmas and proof of Theorem~\ref{thm:pseudo}}\label{sec:lemmas}

To prove our main result, we introduce the notion of an antipath blow-up, which generalizes an antipath.
First we state two lemmas of Klimo\v{s}ov\'{a} and Stein~\cite{KS2023}.

\begin{lemma}[Klimo\v{s}ov\'{a} and Stein~\cite{KS2023}]\label{lem:oddlength}
Let $k \in \mathbb{N}$ and let $G$ be an oriented graph with $\pd(G) \ge k$.
Suppose that a longest antipath in~$G$ has length~$m < 2k-1$.
Then $m$ is odd.
\end{lemma}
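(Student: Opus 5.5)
Suppose to the contrary that a longest antipath $P = v_0 v_1 \dots v_m$ has even length $m$ with $m < 2k-1$, so $m \le 2k-2$. Since $m$ is even, the two end-edges point the same way relative to their ends; reversing $G$ if necessary (this preserves $\pd(G)$ and antipaths), we may assume both $v_0$ and $v_m$ are sources on $P$, that is, $v_0v_1 \in E(G)$ and $v_m v_{m-1} \in E(G)$. Then $d^+(v_0)\ge 1$, so $d^+(v_0) \ge k$, and likewise $d^+(v_m) \ge k$. The plan is a rotation/extension argument on out-neighbourhoods of the two ends.

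First, maximality constrains the neighbourhoods. Any out-neighbour $w$ of $v_0$ lying outside $P$ would let us prepend... but $v_0$ is already a source, so prepending needs an \emph{in}-neighbour. Instead, consider $v_0$'s out-neighbours $w$: if $w \notin V(P)$, then $w v_0$ is wrong direction; we use instead that $w$ has $d^-(w)\ge 1$. The cleaner route is: every out-neighbour $v_j$ of $v_0$ on $P$ gives a rotation. If $v_j$ is a sink on $P$ (odd $j$), then $v_{j-1} v_{j-2}\dots v_0 v_j v_{j+1}\dots v_m$ is an antipath of the same length with new end $v_{j-1}$; we check orientations: $v_0$ becomes interior with out-edges to $v_1,v_j$, fine, and $v_{j-1}$ becomes an end which is a source. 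So out-neighbours of $v_0$ in $P$ at sink positions generate new longest antipaths, and by maximality, neither $v_0$'s out-neighbours nor those of any rotated end lie off $P$ in a way giving extension: an end $u$ that is a source with an out-neighbour $w$ off $P$ does not directly extend, so instead we extend via in-neighbours. Hence I would rather use: the end $v_0$ is a source, so to extend we need a vertex $w\notin V(P)$ with $w v_0$... not available. So switch: look at $v_1$ rather than $v_0$.

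Key counting step: $v_0$'s $k$ out-neighbours must all lie on $P$ (otherwise $w$, being a sink with $d^-(w)\ge1$, appended gives $w v_0 \dots$? no). Honestly the decisive counting I would aim for: $N^+(v_0)\cup N^+(v_m)$ must lie in the sink positions $\{v_1,v_3,\dots,v_{m-1}\}$, which has only $m/2 \le k-1$ elements, contradicting $d^+(v_0)\ge k$. To justify this, I would show (i) an out-neighbour $w\notin V(P)$ of $v_0$ yields the antipath $w v_0 v_1\dots$ only after reversing the end, so instead use that $v_1\dots v_m$ plus $v_0 w$ gives a path $w\,v_0\,v_1\dots v_m$ where $v_0$ has out-edges to $w$ and $v_1$: this is an antipath of length $m+1$, contradiction. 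Good, so $N^+(v_0)\subseteq V(P)$. (ii) An out-neighbour $v_j$ with $j$ even, $j\ge 2$: $v_j$ is a source on $P$; then $v_{j-1}\dots v_1 v_0 v_j \dots v_m$... $v_j$ would have in-edge from $v_0$ and out-edge to $v_{j+1}$, invalid. Instead use the cycle $v_0 v_1\dots v_j$ with $v_0v_j$: this closes an anticycle only if parities match; with $j$ even it fails. Handling even $j$ is the main obstacle; I would try a combined argument with both ends, e.g. pairing $v_0 \to v_j$ and $v_m\to v_{j'}$ crossing edges to produce an anticycle of length $m+1$... parity again. If this fails, I would fall back on an auxiliary maximal-choice (among longest antipaths choose one maximizing something) and count $e(\{v_0,v_m\},P)$ against $2k$ to derive the contradiction.
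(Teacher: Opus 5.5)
\textbf{There is a genuine gap.} The only thing you actually establish is $N^+(v_0)\subseteq V(P)$, and likewise for $v_m$; that step is correct. But it gives only $k\le d^+(v_0)\le m\le 2k-2$, which is no contradiction. Your ``decisive counting'' needs $N^+(v_0)$ to avoid every source position $v_2,v_4,\dots,v_m$, and that is exactly the case you leave open. The fallback ideas (an unspecified extremal choice, counting $e(\{v_0,v_m\},P)$ against $2k$) are not yet arguments.

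Nor can the even case be settled from $v_0$ alone. An edge $v_0v_j$ with $j$ even closes the odd cycle $v_0v_1\dots v_jv_0$, which is antidirected except at $v_j$. However, the tail $v_{j+1}\dots v_m$ hangs off $v_j$, so an outside neighbour of $v_j$ does not give a longer antipath. (The paper gives no proof of this lemma; it quotes it from Klimo\v{s}ov\'a and Stein.)

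\textbf{The missing idea.} It is the two-ended idea you discarded, and the parity defect you ran into is precisely what makes it work. Write $m=2\ell$ with both ends sources. The sets $\{v_i : v_{i+1}\in N^+(v_0)\}$ and $N^+(v_m)$ both lie in $\{v_0,\dots,v_{m-1}\}$, each has size at least $k$, and $m\le 2k-2$. So, by the same count as in Claim~\ref{clm:fatend}, there is $p\in[0,m-1]$ with $v_0v_{p+1},v_mv_p\in E(G)$.

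Then $C=v_0v_1\dots v_pv_mv_{m-1}\dots v_{p+1}v_0$ is a cycle through all $m+1$ vertices of $P$. Being odd, it is not an anticycle. But it is antidirected at every vertex except one, namely $z=v_{p+1}$ if $p$ is odd and $z=v_p$ if $p$ is even; this $z$ has one in-edge and one out-edge on $C$.

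So $d^+(z),d^-(z)\ge 1$, hence both are at least $k$, and $z$ has at least $2k>m$ neighbours. Since $G$ is oriented, at most $m$ of them lie in $V(P)$, so some neighbour $w$ lies outside $P$. If $zw\in E(G)$, delete the edge of $C$ entering $z$; if $wz\in E(G)$, delete the edge of $C$ leaving $z$. In either case, $w$ followed by the resulting Hamilton path of $C$ starting at $z$ is an antipath of length $m+1$, a contradiction.
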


\begin{lemma}[Klimo\v{s}ov\'{a} and Stein~\cite{KS2023}]\label{lem:cycle}
Let $k \in \mathbb{N}$ and let $G$ be an oriented graph with $\pd(G) \ge k$.
Suppose that $G$ has an anticycle of length $2\ell+2< 2k-1$.
Then $G$ contains an antipath of length~$2\ell+2$.
\end{lemma}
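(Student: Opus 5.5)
The statement to prove is Lemma~\ref{lem:cycle}. Let $C = v_0 v_1 \dots v_{2\ell+1} v_0$ be an anticycle of length $2\ell+2 < 2k-1$. Index so that every even-indexed vertex is a source on~$C$ and every odd-indexed vertex is a sink on~$C$. Thus the edges of $C$ are $v_{2i}v_{2i\pm1}$, with indices taken modulo $2\ell+2$. Every $v_{2i}$ has out-degree at least $2$ in $G$, hence at least~$k$ since $\pd(G)\ge k$. Every $v_{2i+1}$ has in-degree at least~$k$. The plan is to extend $C$ at one of its vertices by an edge leaving the cycle, and then delete a cycle edge so that the result is an antipath with $2\ell+2$ edges.

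\textbf{Extension step.} Suppose some source $v_{2i}$ has an out-neighbour $w \notin V(C)$. Delete the edge $v_{2i}v_{2i+1}$ from $C$. What remains is an antipath on all $2\ell+2$ vertices of $C$, with end $v_{2i}$. On this path $v_{2i}$ still has only out-edges, namely $v_{2i}v_{2i-1}$. Appending $v_{2i}w$ keeps $v_{2i}$ a source and makes $w$ a sink endpoint. This gives an antipath of length $(2\ell+1)+1 = 2\ell+2$. Symmetrically, if some sink $v_{2i+1}$ has an in-neighbour outside $V(C)$, we remove $v_{2i}v_{2i+1}$ and append the outside in-neighbour at $v_{2i+1}$.

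\textbf{Remaining case.} Otherwise every source on $C$ has all its out-neighbours in $V(C)$, and every sink has all its in-neighbours in $V(C)$. Here the key point is to count. A source $v_{2i}$ cannot send an edge to another source $v_{2j}$ in a way we can exploit, so we need to show some source has an out-neighbour among the sources. Each source has at least $k$ out-neighbours in $V(C)\setminus\{v_{2i}\}$, a set of $2\ell+1 \le 2k-3$ vertices. In the worst case all out-neighbours are sinks, of which there are $\ell+1$. Since $\ell+1 < k$ (because $2\ell+2 < 2k-1$ gives $\ell+1 \le k-1$), there must be an out-neighbour $v_{2j}$ that is itself a source.

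So $G$ contains an edge $v_{2i}v_{2j}$ between two sources. Since $G$ is oriented, this edge is not a cycle edge. Take the antipath obtained from $C$ by deleting an edge incident to $v_{2j}$, say $v_{2j}v_{2j+1}$. This path ends at $v_{2j}$, where $v_{2j}$ is a source with its remaining out-edge $v_{2j}v_{2j-1}$. We cannot simply attach $v_{2i}v_{2j}$, because $v_{2i}$ already lies on the path. Instead we reroute: delete the cycle edge $v_{2i}v_{2i+1}$, and use the chord $v_{2i}v_{2j}$, which is an in-edge at $v_{2j}$. For the orientation to stay antidirected, $v_{2j}$ must become a sink on the path. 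That forces $v_{2j}$ to lose both of its cycle edges, which breaks the structure.

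I therefore expect this last case to be the real obstacle: a chord between two sources is not directly usable. The fix I would try is to work with the sink side as well. Symmetrically, some sink has an in-neighbour that is a sink, giving a chord $v_{2a+1}v_{2b+1}$. One can also sharpen the counting: every source has at least $k - (\ell+1) \ge 1$ out-neighbours among the sources. A source with no in-edges at all would then... but sources may have in-edges.

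A cleaner route is to pick the chord $v_{2i}v_{2j}$ and use vertex $v_{2j}$ as a sink of degree one at an endpoint. Concretely, take the path
\[
v_{2j+1}, v_{2j+2}, \dots, v_{2i}, \quad \text{then } v_{2j}.
\]
This walks along $C$ from $v_{2j+1}$ up to $v_{2i}$ and then uses the chord to $v_{2j}$. It is antidirected provided the cycle edge entering $v_{2i}$'s side on that arc is an out-edge of $v_{2i}$, which holds since $v_{2i}$ is a source. The remaining vertices $v_{2i+1},\dots,v_{2j-1}$ can be appended at the $v_{2j+1}$ end only if they connect there. So the length achieved depends on $j-i$. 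Choosing $j = i+1$ (an adjacent-pair source chord) would give a Hamilton antipath of length $2\ell+2$. Hence the final step I would attempt is to show, via the degree count restricted to the at most $\ell$ other sources, that some chord can be chosen with suitably positioned endpoints, or else fall back to iterating the extension step.
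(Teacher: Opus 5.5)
Your extension step and the pigeonhole argument giving a source--source chord $v_{2i}v_{2j}$ are correct. However, they only settle the easy case. The case you call ``remaining'' is the real content of the lemma: every source of $C$ has all its out-neighbours on $C$, and every sink has all its in-neighbours on $C$. Your proposal leaves this case open. The paper does not prove this lemma either; it quotes it from Klimo\v{s}ov\'a and Stein, so there is no in-paper argument to compare with.

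The rerouting you end with does not work. The antipath $v_{2j+1}v_{2j+2}\dots v_{2i}v_{2j}$ misses the whole arc $v_{2i+1},\dots,v_{2j-1}$, which is nonempty, so its length is at most $2\ell$. For $j=i+1$ it misses $v_{2i+1}$ and has length exactly $2\ell$, so it is neither Hamiltonian nor of length $2\ell+2$. More fundamentally, $|V(C)|=2\ell+2$, so a Hamilton antipath of $G[V(C)]$ has length only $2\ell+1$. The desired antipath must therefore leave $C$. ``Falling back to the extension step'' is vacuous, because its hypothesis is exactly what fails in the remaining case.

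What is missing is a parity switch combined with an exit from $C$. In the remaining case, an outside neighbour of a source of $C$ can only be an in-neighbour, and an outside neighbour of a sink can only be an out-neighbour. Your chord helps with this:
\begin{itemize}
\item Since $v_{2i}v_{2j}\in E(G)$, we have $d^-(v_{2j})\ge k$, while all of its at least $k$ out-neighbours lie on $C$. As $v_{2j}$ has at most $2\ell+1$ neighbours on $C$, it has at least $2k-(2\ell+1)\ge 3$ in-neighbours outside $C$.
\item Such an in-neighbour $w$ cannot send edges to sinks of $C$. Since $d^+(w)\ge k>\ell+1$, $w$ has an out-neighbour $z\notin V(C)$.
\end{itemize}
So it suffices to find an antipath in $G[V(C)]$ on $2\ell+1$ vertices having $v_{2j}$ as a \emph{sink} endpoint, and then prepend $z\,w$.

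If the chord joins consecutive sources, say $v_{2i}v_{2i+2}\in E(G)$, this works at once. The path $z\,w\,v_{2i+2}\,v_{2i}\,v_{2i-1}\dots v_{2i+3}$ (traversing $C$ backwards, indices modulo $2\ell+2$) is an antipath of length $2\ell+2$. In general, though, the chord may skip a longer arc. That arc must then be recovered by further rerouting, for instance via a second chord such as $v_{2j-2}v_{2j+1}$ that lets the path jump back into the skipped arc, or such configurations must be excluded by a counting argument. Your proposal has no argument for this, so the proof is incomplete at its central step.
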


We now introduce antipath blow-ups. The purpose of this notion is to encode a family of antipaths that share the same internal vertices but may have several possible choices at one or both ends. This allows us to exploit degree conditions at the endpoints and to compare different extensions of a longest antipath.
Let $G$ be an oriented graph and let $\ell \in \mathbb{N} \cup \{0\}$.
For disjoint sets $V_0, V_1, \dots, V_{2\ell+1} \subseteq V(G)$, we say that $V_0 V_1 \dots V_{2 \ell +1}$ is an \emph{antipath blow-up of length $2 \ell +1$} if for all $i \in [0,2\ell+1] $ and all $v_i \in V_i$, $v_0 v_1 \dots v_{2\ell+1}$ is an antipath in~$G$ with $v_0 v_1 \in E(G)$.
Thus all edges of this antipath are directed from vertices with even index to vertices with odd index.
Throughout this paper, we shall only use antipath blow-ups in which $V_i=\{v_i\}$ for all $i\in[2\ell]$.
Therefore, we write $V_0v_1\dots v_{2\ell}V_{2\ell+1}$ instead of $V_0V_1\dots V_{2\ell+1}$, and we write
$V_0v_1\dots v_{2\ell}v_{2\ell+1}$ when $V_{2\ell+1}=\{v_{2\ell+1}\}$.

For $i\in[\ell-1]$, we say that $i$ is \emph{$V_0$-surplus} if $e\bigl(V_0,\{v_{2i},v_{2i+1}\}\bigr) \ge |V_0|+1$.
Similarly, we say that $i$ is \emph{$V_{2\ell+1}$-surplus} if $e\bigl(\{v_{2i},v_{2i+1}\},V_{2\ell+1}\bigr) \ge |V_{2\ell+1}|+1$.

In the next lemma, we study various properties of an antipath blow-up.
It should be noted that a slightly weaker version of~\ref{itm:property:b} was already proved by Grzesik and Skrzypczyk~\cite[Claim~12]{GS2025}.

\begin{lemma} \label{lem:property}
Let $k \in \mathbb{N}$ and let $G$ be an oriented graph with $\pd(G) \ge k$.
Suppose that a longest antipath in~$G$ has length~$2\ell+1 < 2k-1$.
Let $V_0v_1\dots v_{2\ell+1}$ be an antipath blow-up in~$G$ with $|V_0| \ge 2$.
Then the following statements hold:
\begin{enumerate}[label={\rm (\alph*)}]
	\item \label{itm:property:a}
	$N^+(V_0) \subseteq \{v_j \colon j \in [2\ell]\}$;

	\item \label{itm:property:b} if $xv_{2i+1}, x' v_{2i} \in E(G)$ for distinct $x,x' \in V_0$, then $xv_{2i-1},xv_{2i}, x'v_{2i-1} \notin E(G)$;	

	\item \label{itm:property:c}
	if $i$ is $V_0$-surplus, then $d^-(v_{2i},V_0) = 1$, $V_0 \subseteq N^-(v_{2i+1})$, $N^+(v_{2i}) \subseteq \{v_j \colon j \in [2\ell]\}$, and $v_{2i-1} \notin N^+(V_0)$; in particular, $e(V_0,\{ v_{2i}, v_{2i+1}\}) = |V_0| +1$ and $i \in [2, \ell-1]$;

	\item \label{itm:property:d}
	if $i$ is $V_0$-surplus and $e(V_0, \{ v_{2i-2}, v_{2i-1}, v_{2i}, v_{2i+1}\} ) \ge 2|V_0| +1$, then $e(V_0, \{ v_{2i-2}, v_{2i-1}, v_{2i}, v_{2i+1}\} ) = 2|V_0| +1$ and $V_0 \subseteq N^-(v_{2i-2})$;
	
	\item \label{itm:property:e}
	there are no two consecutive integers that are both $V_0$-surplus;

	\item \label{itm:property:g}
	if $i$ is $V_0$-surplus and $v_{2j+1} \in N^+(V_0)$, then $v_{2j} v_{2i} \notin E(G)$;

	\item \label{itm:property:h}
	if $i$ and $i'$ are $V_0$-surplus, then  $v_{2i} v_{2i'} \notin E(G)$;
	
	\item \label{itm:property:i}
	if $i$ is $V_0$-surplus and there are $m$ many $j \in [\ell-1]$ that are $V_0$-surplus, then $ | N^-(v_{2i})  \setminus (V_0 \cup \{v_j \colon j \in [2\ell]\})| \ge m +3$;
	
	\item \label{itm:property:f}
	if $i$ is $V_0$-surplus, then $ v_{2i-2} v_{2\ell+1} , v_{2i} v_{2\ell+1} \notin E(G)$;
	
	\item \label{itm:property:j}
	if $i$ and $i'$ are $V_0$-surplus, $j\in[\ell-1]$,
	and $v_{2j+1}\in N^+(V_0)$, then $v_{2j} \notin N^+(v_{2i}) \cap N^+(v_{2i'})$.
\end{enumerate}
\end{lemma}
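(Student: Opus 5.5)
The plan is to prove every item by contradiction, all resting on one mechanism. Since a longest antipath has length $2\ell+1<2k-1$, the graph $G$ contains no antipath of length $2\ell+2$. By Lemma~\ref{lem:cycle} it also contains no anticycle of length $2\ell+2$. For each forbidden configuration we \emph{reroute}: we exhibit an explicit antipath or anticycle on $2\ell+3$ (resp.\ $2\ell+2$) vertices. It is built from the vertices $v_1,\dots,v_{2\ell+1}$, at most two vertices of $V_0$, and possibly one extra vertex. Every vertex of $V_0$ is a source and every $v_{2j+1}$ is a sink in the blow-up, so each reroute needs only a parity check of the new sequence.

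\textbf{Items \ref{itm:property:a} and \ref{itm:property:b}.} For \ref{itm:property:a}, let $x\in V_0$ and $x\to w$. If $w\notin\{v_1,\dots,v_{2\ell+1}\}$, then $w\,x\,v_1\cdots v_{2\ell+1}$ is an antipath of length $2\ell+2$; this also covers $w=x'\in V_0$. If $w=v_{2\ell+1}$, then $x v_1\cdots v_{2\ell+1}x$ is an anticycle of length $2\ell+2$. Both are impossible, so $N^+(x)\subseteq\{v_1,\dots,v_{2\ell}\}$.

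For \ref{itm:property:b}, the two edges $xv_{2i+1}$ and $x'v_{2i}$ let us cut $P$ between $v_{2i}$ and $v_{2i+1}$ and reattach the two halves through $x$ and $x'$. Each of the three extra edges $xv_{2i-1}$, $xv_{2i}$, $x'v_{2i-1}$ supplies the missing connection that turns the result into an antipath of length $2\ell+2$, or an anticycle of that length to which Lemma~\ref{lem:cycle} applies. I will write out the three sequences explicitly.

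\textbf{Items \ref{itm:property:c}--\ref{itm:property:e}.} These follow from \ref{itm:property:b} by counting. If $e(V_0,\{v_{2i},v_{2i+1}\})\ge|V_0|+1$, pigeonhole gives some $x$ adjacent to both $v_{2i}$ and $v_{2i+1}$, and also a second vertex $x'$ with an edge into $\{v_{2i},v_{2i+1}\}$. Applying \ref{itm:property:b} to all such pairs forces the exact shape: $d^-(v_{2i},V_0)=1$, $V_0\subseteq N^-(v_{2i+1})$, and $v_{2i-1}\notin N^+(V_0)$. This gives $e=|V_0|+1$ exactly.

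For $N^+(v_{2i})\subseteq\{v_j\}$, let $x$ be the unique in-neighbour of $v_{2i}$ in $V_0$. Then $v_{2i}$ is the endpoint of a new longest antipath, for instance $v_{2i}v_{2i-1}\cdots v_1\,x\,v_{2i+1}\cdots v_{2\ell+1}$ after a parity check. Together with a second vertex of $V_0$, this yields a blow-up with $|V_0|\ge2$ to which \ref{itm:property:a} applies. The case $i=1$ is excluded because then $v_{2i-1}=v_1\in N^+(V_0)$.

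For \ref{itm:property:d} and \ref{itm:property:e}, I would combine \ref{itm:property:c} for $i$ and $i-1$ with \ref{itm:property:b}. The count $2|V_0|+1$ on four vertices, together with $v_{2i-1}\notin N^+(V_0)$, forces $V_0\subseteq N^-(v_{2i-2})$. If $i-1$ were also surplus, this contradicts $d^-(v_{2i-2},V_0)=1$.

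\textbf{Items \ref{itm:property:g}, \ref{itm:property:h}, \ref{itm:property:f}, \ref{itm:property:j}.} These use the rerouted path from \ref{itm:property:c} in which $v_{2i}$ becomes an end. If $v_{2j}v_{2i}\in E(G)$ and $v_{2j+1}\in N^+(V_0)$, we splice again to get length $2\ell+2$; the same applies to $v_{2i}v_{2i'}$ for two surplus indices. For \ref{itm:property:f}, an edge $v_{2i}v_{2\ell+1}$ or $v_{2i-2}v_{2\ell+1}$ closes an anticycle of length $2\ell+2$. Item \ref{itm:property:j} is the two-surplus version of \ref{itm:property:g}.

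\textbf{Item \ref{itm:property:i} (main obstacle).} Here we use the degree condition. Since $v_{2i}$ has an in-neighbour, $d^-(v_{2i})\ge k\ge\ell+2$. Its in-neighbours in $V_0$ number exactly one, and no odd $v_j$ can point into it. So we must bound $|N^-(v_{2i})\cap\{v_{2j}\}|$, removing from $\{v_2,\dots,v_{2\ell}\}$:
\begin{itemize}
\item $v_{2i}$ itself;
\item the $m-1$ other surplus indices, by \ref{itm:property:h};
\item every $v_{2j}$ with $v_{2j+1}\in N^+(V_0)$, by \ref{itm:property:g}.
\end{itemize}
By \ref{itm:property:a} and $d^+(x)\ge k$, the last family is large. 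Combining this with \ref{itm:property:c}, which gives $v_{2i-1}\notin N^+(V_0)$, should leave at least $m+3$ in-neighbours outside the path. Getting the arithmetic to give exactly $m+3$ is the delicate point. I would first try double-counting the out-edges of one fixed $x\in V_0$ against the even indices.
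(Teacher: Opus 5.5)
Your treatment of items~\ref{itm:property:a}--\ref{itm:property:e} follows the paper, but the argument breaks at item~\ref{itm:property:i}, which you rightly flag as the crux.

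\textbf{The count in item~\ref{itm:property:i}.} Your claim that no odd $v_j$ is an in-neighbour of $v_{2i}$ is not justified. Being a sink of $P$ says nothing about the out-neighbours of $v_{2j+1}$ in $G$. Nothing in items~\ref{itm:property:a}--\ref{itm:property:h} excludes an edge $v_{2j+1}v_{2i}$, and none of your splices uses an edge leaving an odd $v_j$.

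Even granting that claim, your count does not reach $m+3$. Let $R$ be the set of even vertices you remove. It already contains all $m$ vertices $v_{2i'}$ with $i'$ surplus, because $v_{2i'+1}\in N^+(V_0)$. You then get $|N^-(v_{2i})\setminus(V_0\cup V')|\ge k-1-(\ell-|R|)\ge |R|+1$. So you would need two non-surplus $j\in[\ell-1]$ with $v_{2j+1}\in N^+(V_0)$. From item~\ref{itm:property:a} and $d^+(x)\ge k$ you only learn that $x$ has at least $k-\ell-1$ odd out-neighbours among $v_3,\dots,v_{2\ell-1}$. Nothing you cite prevents these from all being of the form $v_{2i'+1}$ with $i'$ surplus.

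The missing idea is to use the \emph{out}-degree of $v_{2i}$ itself. Let $V'=\{v_j\colon j\in[2\ell]\}$ and $V_{2I}=\{v_{2i'}\colon i'\in[\ell-1] \text{ is } V_0\text{-surplus}\}$.
\begin{enumerate}
\item By item~\ref{itm:property:c}, $N^+(v_{2i})\subseteq V'$.
\item By item~\ref{itm:property:h} applied in both directions, $v_{2i}$ has no in- or out-neighbour in $V_{2I}$.
\item Hence $v_{2i}$ has at least $k$ out-neighbours in $V'\setminus V_{2I}$, a set of size $2\ell-m$. Since $G$ is oriented, $v_{2i}$ has at most $2\ell-m-k$ in-neighbours in $V'$, odd and even alike.
\item With $d^-(v_{2i},V_0)=1$ and $d^-(v_{2i})\ge k$ this gives $|N^-(v_{2i})\setminus(V_0\cup V')|\ge k-1-(2\ell-m-k)=m+2(k-\ell)-1\ge m+3$.
\end{enumerate}

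\textbf{The edge $v_{2i-2}v_{2\ell+1}$ in item~\ref{itm:property:f}.} This edge does not close an anticycle of length $2\ell+2$. The natural anticycle through it is $v_{2i-2}v_{2\ell+1}v_{2\ell}\cdots v_{2i+1}\,x\,v_1\cdots v_{2i-2}$. It omits $v_{2i-1}$ and $v_{2i}$, so it has length only $2\ell$ and yields no contradiction.

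The paper instead takes $x\in V_0\setminus\{x_i\}$, where $x_i$ is the unique in-neighbour of $v_{2i}$ in $V_0$. It also takes an in-neighbour $z$ of $v_{2i}$ outside $V_0\cup V'\cup\{v_{2\ell+1}\}$, which exists by item~\ref{itm:property:i}. It then uses the antipath $x v_1\cdots v_{2i-2}v_{2\ell+1}v_{2\ell}\cdots v_{2i+1}x_iv_{2i}z$ of length $2\ell+2$. So this item must come after the $m+3$ bound. The other edge, $v_{2i}v_{2\ell+1}$, is already excluded by $N^+(v_{2i})\subseteq V'$.

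\textbf{The splice in item~\ref{itm:property:g}.} You cannot keep $v_{2i}$ as an end of the rerouted path, because the in-edge $v_{2j}v_{2i}$ cannot be attached at a source end. In the paper $v_{2i}$ becomes an interior sink fed by $x_i$ and $v_{2j}$. For example, when $i<j$ and $x\in V_0\setminus\{x_i\}$ has $xv_{2j+1}\in E(G)$, the antipath is $v_{2i-1}\cdots v_1x_iv_{2i}v_{2j}v_{2j-1}\cdots v_{2i+1}\,x\,v_{2j+1}\cdots v_{2\ell+1}$. There are three analogous cases.
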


By considering the reverse of $G$, the corresponding statements hold for antipath blow-up $v_0v_1v_2\dots v_{2\ell}V_{2\ell+1}$ with $|V_{2\ell+1}|\ge 2$.
For example, Lemma~\ref{lem:property}\ref{itm:property:c} would imply that if $v_0v_1v_2\dots v_{2\ell}V_{2\ell+1}$ is an antipath blow-up with $|V_{2\ell+1}|\ge 2$ and $i$ is $V_{2\ell+1}$-surplus, then $d^+(v_{2i+1},V_{2\ell+1}) = 1$, $V_{2\ell+1} \subseteq N^+(v_{2i})$, $N^-(v_{2i+1}) \subseteq \{v_j \colon j \in [2\ell]\}$, and $v_{2i+2} \notin N^-(V_{2\ell+1})$.

\begin{proof}[Proof of Lemma~\ref{lem:property}]
Let $V' =  \{v_j \colon j \in [2\ell]\}$. Since $2\ell+1<2k-1$ and $2k-1$ is odd, we have $2\ell+2<2k-1$.
By Lemma~\ref{lem:cycle}, $G$ does not contain any anticycle of length $2 \ell +2$.


Clearly, $N^+(V_0) \subseteq \{v_j \colon j \in [2\ell+1]\}$ or else we would obtain a longer antipath, a contradiction.
If $v_{2\ell+1} \in N^+(V_0)$, then $G$ contains an anticycle of length $2\ell +2$, a contradiction.
Thus $N^+(V_0) \subseteq V'$, implying~\ref{itm:property:a}.


Consider distinct $x,x' \in V_0$ with $xv_{2i+1}, x'v_{2i} \in E(G)$.
Suppose to the contrary that~\ref{itm:property:b} is false.
Then $G$ contains an antipath of length~$2\ell+2$, namely
\begin{align*}
	& v_{2i} x' v_1 \dots v_{2i-1} x v_{2i+1} \dots v_{2\ell+1}
		& & \text{if $xv_{2i-1} \in E(G)$,}\\
	& 	v_{2i-1} \dots  v_1 x' v_{2i} x v_{2i+1} \dots v_{2\ell+1}
		& & \text{if $xv_{2i} \in E(G)$,}\\
	& v_{2i} x'v_{2i-1} \dots v_{1} x v_{2i+1} \dots v_{2\ell+1}
		& & \text{if $x' v_{2i-1}\in E(G)$,}
\end{align*}
(see Figure~\ref{fig:pro:2}), a contradiction.
Hence~\ref{itm:property:b} holds.
\begin{figure}
	\centering
	\begin{subfigure}{0.475\textwidth}
		\centering
		\begin{tikzpicture}[scale=0.7]

				\foreach \x/\y in {5/x',7.5/x}
					{
					\filldraw[black] (\x,1.5) circle (3pt);
					\node at (\x,1.9) {${\y}$};
					}
			
				\foreach \x/\y in { 1/1,5/2i-1,6/2i,7/2i+1,11/2\ell+1}
					{
					\filldraw[black] (\x,0) circle (3pt);
					\node at (\x,-0.4) {$v_{\y}$};
					}
				\foreach \x in {6,10,4}
					{		\draw[->-,thin] (\x,0)--(\x+1,0);		}
					\foreach \x in {2,6,8}
					{		\draw[->-,thin] (\x,0)--(\x-1,0);		}
					
				\begin{scope}[line width = 1.5pt]
					\draw[->-] (5,1.5)--(6,0);
					\draw[->-] (5,1.5)--(1,0);
					\draw[->-] (7.5,1.5)--(5,0);
					\draw[->-] (7.5,1.5)--(7,0);
					\draw (1,0)--(5,0);
					\draw (7,0)--(11,0);
					\foreach \x/\y in {2/1,4/5,8/7,10/11}
					{		\draw[->-] (\x,0)--(\y,0);		}	
				\end{scope}
	\end{tikzpicture}
	\caption{if $xv_{2i-1}, x'v_{2i}, xv_{2i+1} \in E(G)$}
	\label{fig:pro:2:1}
	\end{subfigure}
	\begin{subfigure}{0.475\textwidth}
		\centering
		\begin{tikzpicture}[scale=0.7]

				\foreach \x/\y in { 5.5/x',6.5/x}
					{
					\filldraw[black] (\x,1.5) circle (3pt);
					\node at (\x,1.9) {${\y}$};
					}
			
				\foreach \x/\y in { 1/1,5/2i-1,6/2i,7/2i+1,11/2\ell+1}
					{
					\filldraw[black] (\x,0) circle (3pt);
					\node at (\x,-0.4) {$v_{\y}$};
					}
				\foreach \x in {6,10,4}
					{		\draw[->-,thin] (\x,0)--(\x+1,0);		}
					\foreach \x in {2,6,8}
					{		\draw[->-,thin] (\x,0)--(\x-1,0);		}
					
				\begin{scope}[line width = 1.5pt]
					\draw[->-] (5.5,1.5)--(6,0);
					\draw[->-] (5.5,1.5)--(1,0);
					\draw[->-] (6.5,1.5)--(6,0);
					\draw[->-] (6.5,1.5)--(7,0);
					\draw (1,0)--(5,0);
					\draw (7,0)--(11,0);
				\foreach \x/\y in {2/1,4/5,8/7,10/11}
					{		\draw[->-] (\x,0)--(\y,0);		}	
				\end{scope}
	\end{tikzpicture}
	\caption{if $xv_{2i}, x'v_{2i}, xv_{2i+1} \in E(G)$}
		\label{fig:pro:2:2}
	\end{subfigure}
	\begin{subfigure}{0.475\textwidth}
		\centering
		\begin{tikzpicture}[scale=0.7]

				\foreach \x/\y in { 5.5/x',6.5/x}
					{
					\filldraw[black] (\x,1.5) circle (3pt);
					\node at (\x,1.9) {${\y}$};
					}
			
				\foreach \x/\y in { 1/1,5/2i-1,6/2i,7/2i+1,11/2\ell+1}
					{
					\filldraw[black] (\x,0) circle (3pt);
					\node at (\x,-0.4) {$v_{\y}$};
					}
				\foreach \x in {6,10,4}
					{		\draw[->-,thin] (\x,0)--(\x+1,0);		}
					\foreach \x in {2,6,8}
					{		\draw[->-,thin] (\x,0)--(\x-1,0);		}
					
				\begin{scope}[line width = 1.5pt]
					\draw[->-] (5.5,1.5)--(6,0);
					\draw[->-] (5.5,1.5)--(5,0);
					\draw[->-] (6.5,1.5)--(1,0);
					\draw[->-] (6.5,1.5)--(7,0);
					\draw (1,0)--(5,0);
					\draw (7,0)--(11,0);
					\foreach \x/\y in {2/1,4/5,8/7,10/11}
					{		\draw[->-] (\x,0)--(\y,0);		}	
				\end{scope}
	\end{tikzpicture}
	\caption{if $x'v_{2i-1}, x'v_{2i}, xv_{2i+1} \in E(G)$}
		\label{fig:pro:2:3}
	\end{subfigure}
	\caption{antipaths considered in the proof of Lemma~\ref{lem:property}\ref{itm:property:b}.}
	\label{fig:pro:2}
\end{figure}

Suppose that $i$ is $V_0$-surplus, that is, $e( V_0 , \{ v_{2i}, v_{2i+1}\}) \ge |V_0|+1$. Then there exists $x_i \in V_0$ such that $x_iv_{2i}, x_iv_{2i+1} \in E(G)$.
By~\ref{itm:property:b}, we deduce that $v_{2i} \notin N^+(V_0 \setminus \{x_i\})$ and so $V_0 \subseteq N^-(v_{2i+1})$. In particular, $e(V_0,\{v_{2i},v_{2i+1}\})=|V_0|+1$.
Again by~\ref{itm:property:b}, we have $v_{2i-1} \notin N^+(V_0)$.
Recall that $V_0 \subseteq N^-(v_1)$, so $i \ge 2$.
For all $x \in V_0$, note that $v_{2i}\dots v_1xv_{2i+1}\dots v_{2\ell+1}$ is an antipath of length $2\ell+1$.
Since there is no anticycle or antipath of length $2\ell +2$, we have $N^+(v_{2i})\subseteq V'$. Hence \ref{itm:property:c} holds.

By~\ref{itm:property:c}, $e(V_0,\{v_{2i},v_{2i+1}\})=|V_0|+1$ and $v_{2i-1}\notin N^+(V_0)$.
Therefore $e(V_0,\{v_{2i-2},v_{2i-1}\})=e(V_0,v_{2i-2})\le |V_0|$.
If $e(V_0,\{v_{2i-2},v_{2i-1},v_{2i},v_{2i+1}\})\ge 2|V_0|+1$,
then $e(V_0,v_{2i-2})=|V_0|$. Thus~\ref{itm:property:d} holds.

If $i$ and $i-1$ are $V_0$-surplus, then
\begin{align*}
	e(V_0,\{v_{2i-2},v_{2i-1},v_{2i},v_{2i+1}\}) = e(V_0,\{v_{2i-2},v_{2i-1}\}) + e(V_0,\{v_{2i},v_{2i+1}\}) \ge 2 (|V_0|+1).
\end{align*}
This contradicts~\ref{itm:property:d}.
Hence, \ref{itm:property:e} follows.



Suppose that $i$ is $V_0$-surplus and there exists $j\in[\ell-1]$ with $v_{2j+1}\in N^+(V_0)$ and $v_{2j}v_{2i}\in E(G)$.
By~\ref{itm:property:c}, there exist distinct $x_i , x \in V_0$ such that $x_{i} v_{2i}, x v_{2i+1}, x_{i} v_{2i+1} \in E(G)$ and $v_{2j+1} \in N^+(x_i) \cup N^+(x)$.
However, $G$ contains an antipath of length~$2\ell+2$, namely
\begin{align*}
	& v_{2i-1} \dots v_1 x_i v_{2i} v_{2j} \dots v_{2i+1} x v_{2j+1} \dots v_{2 \ell +1}
		& & \text{if $i <j$ and  $x v_{2j+1} \in E(G)$,}\\
	& v_{2i-1} \dots v_1 x v_{2i+1} \dots v_{2j} v_{2i} x_i v_{2j+1} \dots v_{2 \ell +1}
		& & \text{if $i <j$ and  $x_{i} v_{2j+1} \in E(G)$,}\\
	& v_{2i-1} \dots v_{2j+1} x v_{1} \dots v_{2j}  v_{2i} x_i v_{2i+1} \dots v_{2 \ell +1}
		& & \text{if $i  > j$ and  $x v_{2j+1} \in E(G)$,}\\
	& v_{2i-1} \dots v_{2j+1} x_i v_{2i} v_{2j} \dots v_{1} x v_{2i+1} \dots v_{2 \ell +1}
		& & \text{if $i  > j$ and  $x_i v_{2j+1} \in E(G)$,}
\end{align*}
(see Figure~\ref{fig:pro:8}), a contradiction.
Hence~\ref{itm:property:g} holds.
\begin{figure}
	\centering
	\begin{subfigure}{0.475\textwidth}
	\centering
		\begin{tikzpicture}[scale=0.7]

				\foreach \x/\y in { 6/x_i,11/x}
					{
					\filldraw[black] (\x,1.5) circle (3pt);
					\node at (\x,1.9) {${\y}$};
					}
			\filldraw[black] (5,0) circle (3pt);
				\foreach \x/\y in { 2.5/1,5/2i-1,6/2i,7/2i+1,10/2j,11/2j+1,13.5/2\ell+1}
					{
					\filldraw[black] (\x,0) circle (3pt);
					\node at (\x,-0.4) {$v_{\y}$};
					}

				\foreach \x in {6,10}
					{		\draw[->-,thin] (\x,0)--(\x+1,0);		}
					\foreach \x in {6,8}
					{		\draw[->-,thin] (\x,0)--(\x-1,0);		}
					
				\begin{scope}[line width =1.5pt]
					\draw[->-] (6,1.5)--(2.5,0);
					\draw[->-] (6,1.5)--(6,0);
					\draw[->-] (11,1.5)--(7,0);
					\draw[->-] (11,1.5)--(11,0);
					\draw (2.5,0)--(5,0);
					\draw (7,0)--(10,0);
					\draw (11,0)--(13.5,0);
					\draw[bend right, ->-] (10,0) to (6,0);
					\foreach \x/\y in {3.5/2.5,4/5,8/7,10/9,12/11,12.5/13.5}
					{		\draw[->-] (\x,0)--(\y,0);		}	
				\end{scope}
	\end{tikzpicture}
	\caption{if $i<j$  and $x v_{2j+1},v_{2j} v_{2i} \in E(G)$.}
		\label{fig:pro:8:1}
	\end{subfigure}
	\begin{subfigure}{0.475\textwidth}
	\centering
		\begin{tikzpicture}[scale=0.7]

				\foreach \x/\y in { 8.5/x_i,6/x}
					{
					\filldraw[black] (\x,1.5) circle (3pt);
					\node at (\x,1.9) {${\y}$};
					}
			\filldraw[black] (5,0) circle (3pt);
				\foreach \x/\y in { 2.5/1,5/2i-1,6/2i,7/2i+1,10/2j,11/2j+1,13.5/2\ell+1}
					{
					\filldraw[black] (\x,0) circle (3pt);
					\node at (\x,-0.4) {$v_{\y}$};
					}
				\foreach \x in {6,10}
					{		\draw[->-,thin] (\x,0)--(\x+1,0);		}
					\foreach \x in {6,8}
					{		\draw[->-,thin] (\x,0)--(\x-1,0);		}
					
				\begin{scope}[line width =1.5pt]
					\draw[->-] (8.5,1.5)--(11,0);
					\draw[->-] (8.5,1.5)--(6,0);
					\draw[->-] (6,1.5)--(2.5,0);
					\draw[->-] (6,1.5)--(7,0);
					\draw (2.5,0)--(5,0);
					\draw (7,0)--(10,0);
					\draw (11,0)--(13.5,0);
					\draw[bend right, ->-] (10,0) to (6,0);
					\foreach \x/\y in {3.5/2.5,4/5,8/7,10/9,12/11,12.5/13.5}
					{		\draw[->-] (\x,0)--(\y,0);		}	
				\end{scope}
	\end{tikzpicture}
	\caption{if $i<j$  and $x_i v_{2j+1},v_{2j} v_{2i} \in E(G)$.}
		\label{fig:pro:8:2}
	\end{subfigure}\\
	\begin{subfigure}{0.475\textwidth}
	\centering
		\begin{tikzpicture}[scale=0.7]

				\foreach \x/\y in { 10/x_i,6/x}
					{
					\filldraw[black] (\x,1.5) circle (3pt);
					\node at (\x,1.9) {${\y}$};
					}
	
				\foreach \x/\y in { 2.5/1,9/2i-1,10/2i,11/2i+1,6/2j,7/2j+1,13.5/2\ell+1}
					{
					\filldraw[black] (\x,0) circle (3pt);
					\node at (\x,-0.4) {$v_{\y}$};
					}
				\foreach \x in {6,10}
					{		\draw[->-,thin] (\x,0)--(\x+1,0);		}
					\foreach \x in {6,8,10}
					{		\draw[->-,thin] (\x,0)--(\x-1,0);		}
					
				\begin{scope}[line width =1.5pt]
					\draw[->-] (6,1.5)--(2.5,0);
					\draw[->-] (6,1.5)--(7,0);
					\draw[->-] (10,1.5)--(10,0);
					\draw[->-] (10,1.5)--(11,0);
					\draw (2.5,0)--(6,0);
					\draw (7,0)--(9,0);
					\draw (11,0)--(13.5,0);
					\draw[bend left, ->-] (6,0) to (10,0);
					\foreach \x/\y in {3.5/2.5,6/5,8/7,12/11,12.5/13.5}
					{		\draw[->-] (\x,0)--(\y,0);		}	
				\end{scope}
	\end{tikzpicture}
	\caption{if $i>j$  and $x v_{2j+1},v_{2j} v_{2i} \in E(G)$.}
		\label{fig:pro:8:3}
	\end{subfigure}
	\begin{subfigure}{0.475\textwidth}
	\centering
		\begin{tikzpicture}[scale=0.7]

				\foreach \x/\y in { 8.5/x_i,6/x}
					{
					\filldraw[black] (\x,1.5) circle (3pt);
				\node at (\x,1.9) {${\y}$};
					}

				\foreach \x/\y in { 2.5/1,9/2i-1,10/2i,11/2i+1,6/2j,7/2j+1,13.5/2\ell+1}
					{
					\filldraw[black] (\x,0) circle (3pt);
				\node at (\x,-0.4) {$v_{\y}$};
					}
				\foreach \x in {6,10}
					{		\draw[->-,thin] (\x,0)--(\x+1,0);		}
					\foreach \x in {6,8,10}
					{		\draw[->-,thin] (\x,0)--(\x-1,0);		}
					
				\begin{scope}[line width =1.5pt]
					\draw[->-] (6,1.5)--(2.5,0);
					\draw[->-] (6,1.5)--(11,0);
					\draw[->-] (8.5,1.5)--(10,0);
					\draw[->-] (8.5,1.5)--(7,0);
					\draw (2.5,0)--(6,0);
					\draw (7,0)--(9,0);
					\draw (11,0)--(13.5,0);
					\draw[bend left=25, ->-] (6,0) to (10,0);
					\foreach \x/\y in {3.5/2.5,6/5,8/7,12/11,12.5/13.5}
					{		\draw[->-] (\x,0)--(\y,0);		}	
				\end{scope}
	\end{tikzpicture}
	\caption{if $i>j$  and $x_i v_{2j+1},v_{2j} v_{2i} \in E(G)$.}
		\label{fig:pro:8:4}
	\end{subfigure}
	\caption{antipaths considered in the proof of Lemma~\ref{lem:property}\ref{itm:property:g}.}
	\label{fig:pro:8}
\end{figure}


If $i$ and $i'$ are $V_0$-surplus, then $v_{2i+1},v_{2i'+1}\in N^+(V_0)$ by~\ref{itm:property:c}.
By~\ref{itm:property:g} with $\{i,j\}= \{i,i'\}$, we have $v_{2i}v_{2i'}\notin E(G)$, implying~\ref{itm:property:h}.

Let $I = \{ i \in [\ell-1] \colon \text{ $i$ is $V_0$-surplus}\}$ and $V_{2I} = \{v_{2i} \colon i \in I\}$.
Consider $i \in I$.
By~\ref{itm:property:h},
\begin{align}
	\left( N^+(v_{2i}) \cup N^-(v_{2i}) \right)  \cap V_{2I}  = \emptyset.
	\label{eqn::property:9}
\end{align}
By~\ref{itm:property:c}, we have $V_0 \subseteq N^-(v_{2i+1})$, $d^-(v_{2i},V_0) =1$, and $N^+(v_{2i})\subseteq V'$.
By~\eqref{eqn::property:9}, we have
\begin{align*}
	d^+(v_{2i},V' \setminus V_{2I}) =  d^+(v_{2i}, V')  = d^+(v_{2i}) \ge k.
\end{align*}
Let $x_i\in V_0$ be such that $x_iv_{2i}\in E(G)$. Thus $d^-(v_{2i}) \ge 1$ and so $d^-(v_{2i})\ge k$.
Note that
\begin{align*}
		|N^-(v_{2i}) \setminus (V_0 \cup V')|
		& = d^-(v_{2i}) - d^-(v_{2i}, V_0)  - d^-(v_{2i},V'\setminus V_{2I}) - d^-(v_{2i}, V_{2I}) \\
		& \ge k - 1 -\left( |V' \setminus V_{2I}| - d^+(v_{2i},V' \setminus V_{2I})  \right)-0
		\\
		& \ge	k-1-(2\ell - |I| -  k )  = |I|+ 2(k-\ell) -1 \ge |I| + 3=m+3.
\end{align*}
Hence~\ref{itm:property:i} holds.

Let $i$ be $V_0$-surplus.
By~\ref{itm:property:c}, $V_0 \subseteq N^-(v_{2i+1})$.
If $v_{2i}v_{2\ell+1}\in E(G)$, then $v_1 \dots v_{2i} v_{2\ell+1} \dots v_{2i+1} x v_1$ is an anticycle of length $2\ell +2$ for all $x\in V_0$ (see Figure~\ref{fig:pro:5:1}), a contradiction. Hence $v_{2i}v_{2\ell+1}\notin E(G)$.
By~\ref{itm:property:c}, we have that $N^+(v_{2i})\subseteq V'$ and that there exists exactly one $x_i\in V_0$ with $x_iv_{2i}\in E(G)$.
By~\ref{itm:property:i}, $|N^{-}(v_{2i}) \setminus (V_0 \cup V')| \ge 4$.
Let $x \in V_0 \setminus \{x_i\}$ and $z \in N^-(v_{2i}) \setminus  (V_0 \cup V' \cup \{v_{2\ell+1}\}) $.
If $v_{2i-2} v_{2\ell+1} \in E(G)$, then $G$ contains an antipath of length $2\ell +2$, namely $xv_1 \dots v_{2i-2} v_{2\ell+1} \dots v_{2i+1} x_i v_{2i} z $ (see Figure~\ref{fig:pro:5:2}), a contradiction.
Hence~\ref{itm:property:f} holds.
\begin{figure}
	\centering
		\begin{subfigure}{0.475\textwidth}
			\centering
		\begin{tikzpicture}[scale=0.7]

				\foreach \x/\y in { 6/x}
					{
					\filldraw[black] (\x,1.5) circle (3pt);
					\node at (\x,1.9) {${\y}$};
					}
			
				\foreach \x/\y in { 1/1,5/2i-1,6/2i,7/2i+1,11/2\ell+1}
					{
					\filldraw[black] (\x,0) circle (3pt);
					\node at (\x,-0.4) {$v_{\y}$};
					}
				\foreach \x in {6,10}
					{		\draw[->-,thin] (\x,0)--(\x+1,0);		}
					\foreach \x in {2,6,8}
					{		\draw[->-,thin] (\x,0)--(\x-1,0);		}
					
				\begin{scope}[line width =1.5pt]
					\draw[->-] (6,1.5)--(7,0);
					\draw[->-] (6,1.5)--(1,0);
					\draw (1,0)--(6,0);
					\draw (7,0)--(11,0);
					\draw[bend left, ->-] (6,0) to (11,0);
					\foreach \x/\y in {2/1,6/5,8/7,10/11}
					{		\draw[->-] (\x,0)--(\y,0);		}	
				\end{scope}
	\end{tikzpicture}
	\caption{if $i$ is $V_0$-surplus and $v_{2i}v_{2\ell+1} \in E(G)$.}
		\label{fig:pro:5:1}
	\end{subfigure}
	\begin{subfigure}{0.475\textwidth}
		\centering
		\begin{tikzpicture}[scale=0.7]

				\foreach \x/\y in { 1.5/x,4/z,5/x_i}
					{
					\filldraw[black] (\x,1.5) circle (3pt);
					\node at (\x,1.9) {${\y}$};
					}
			\filldraw[black] (5,0) circle (3pt);
				\foreach \x/\y in { 1/1,4/2i-2,6/2i,7/2i+1,11/2\ell+1}
					{
					\filldraw[black] (\x,0) circle (3pt);
					\node at (\x,-0.4) {$v_{\y}$};
					}
				\foreach \x in {6,10,4}
					{		\draw[->-,thin] (\x,0)--(\x+1,0);		}
					\foreach \x in {2,6,8}
					{		\draw[->-,thin] (\x,0)--(\x-1,0);		}
					
				\begin{scope}[line width =1.5pt]
					\draw[->-] (1.5,1.5)--(1,0);
					\draw[->-] (4,1.5)--(6,0);
					\draw[->-] (5,1.5)--(6,0);
					\draw[->-] (5,1.5)--(7,0);
					\draw (1,0)--(4,0);
					\draw (7,0)--(11,0);
					\draw[bend left=20, ->-] (4,0) to (11,0);
					\foreach \x/\y in {2/1,4/3,8/7,10/11}
					{		\draw[->-] (\x,0)--(\y,0);		}	
				\end{scope}
	\end{tikzpicture}
	\caption{if $i$ is $V_0$-surplus and $v_{2i-2}v_{2\ell+1} \in E(G)$.}
		\label{fig:pro:5:2}
	\end{subfigure}
		\caption{anticycle and antipath considered in the proof of Lemma~\ref{lem:property}\ref{itm:property:f}.}
	\label{fig:pro:5}
\end{figure}

Suppose that $i$ and $i'$ are distinct $V_0$-surplus integers and $j\in[\ell-1]$ such that  $v_{2j+1}\in N^+(V_0)$ and $v_{2j} \in N^+(v_{2i}) \cap N^+(v_{2i'})$.
Without loss of generality, assume that $i<i'$.
By~\ref{itm:property:c} and $|V_0|\ge 2$, there exist distinct $x,x' \in V_0$ such that $xv_{2i+1}, xv_{2i'+1}, x'v_{2i+1}, x'v_{2i'+1} \in E(G)$.
Then $G$ contains an antipath of length~$2\ell+2$, namely
\begin{align*}
	&v_{2j-1} \dots v_{2i'+1} x' v_1 \dots v_{2i}  v_{2j} v_{2i'} \dots v_{2i+1}	x v_{2j+1} \dots v_{2\ell+1}
	& &\text{if $i < i' <j$,}\\
	& v_{2j-1} \dots v_{2i+1} x' v_1 \dots v_{2i}  v_{2j} v_{2i'} \dots v_{2j+1}	x v_{2i'+1} \dots v_{2\ell+1}
		& & \text{if $i < j<i' $,}\\
	& v_{2j-1} \dots v_{1} x v_{2j+1} \dots v_{2i}  v_{2j} v_{2i'} \dots v_{2i+1}	x' v_{2i'+1} \dots v_{2\ell+1}
		& & \text{if $j < i<i' $,}
\end{align*}
(see Figure~\ref{fig:pro:7}), a contradiction.
Hence~\ref{itm:property:j} holds.
\begin{figure}
	\centering
	\begin{subfigure}{1\textwidth}
	\centering
		\begin{tikzpicture}[scale=0.8]

				\foreach \x/\y in {6/x',13/x}
					{
					\filldraw[black] (\x,1.5) circle (3pt);
					\node at (\x,1.9) {${\y}$};
					}
				\foreach \x/\y in {4/1,6/2i,7/2i+1,10/2i',11/2i'+1,13/2j-1,14/2j,15/2j+1,18/2\ell+1}
					{
					\filldraw[black] (\x,0) circle (3pt);
					\node at (\x,-0.4) {$v_{\y}$};
					}
				\foreach \x/\y in {6/7,10/11,14/13,14/15}
					{		\draw[->-,thin] (\x,0)--(\y,0);		}
					
				\begin{scope}[line width =1.5pt]
					\draw[->-] (6,1.5)--(4,0);
					\draw[->-] (6,1.5)--(11,0);
					\draw[->-] (13,1.5)--(7,0);
					\draw[->-] (13,1.5)--(15,0);
					\draw (4,0)--(6,0);
					\draw (10,0)--(7,0);
					\draw (15,0)--(18,0);
					\draw (11,0)--(13,0);
					\draw[bend left, ->-] (10,0) to (14,0);
					\draw[bend left, ->-] (6,0) to (14,0);
					\foreach \x/\y in {4.5/4,6/5,8/7,10/9,12/11,16/15,17/18,12/13}
					{		\draw[->-] (\x,0)--(\y,0);		}	
				\end{scope}
	\end{tikzpicture}
	\caption{if $i < i' <j$.}
		\label{fig:pro:7:1}
	\end{subfigure}
	\begin{subfigure}{1\textwidth}
	\centering
		\begin{tikzpicture}[scale=0.8]

				\foreach \x/\y in { 8/x',12/x}
					{
					\filldraw[black] (\x,1.5) circle (3pt);
					\node at (\x,1.9) {${\y}$};
					}
				\foreach \x/\y in {4/1,6/2i,7/2i+1,14/2i',15/2i'+1,9/2j-1,10/2j,11/2j+1,18/2\ell+1}
					{
					\filldraw[black] (\x,0) circle (3pt);
					\node at (\x,-0.4) {$v_{\y}$};
					}
				\foreach \x/\y in {6/7,10/11,10/9,14/15}
					{		\draw[->-,thin] (\x,0)--(\y,0);		}
					
				\begin{scope}[line width =1.5pt]
					\draw[->-] (8,1.5)--(4,0);
					\draw[->-] (8,1.5)--(7,0);
					\draw[->-] (12,1.5)--(11,0);
					\draw[->-] (12,1.5)--(15,0);
					\draw (4,0)--(6,0);
					\draw (7,0)--(9,0);
					\draw (15,0)--(18,0);
					\draw (11,0)--(14,0);
					\draw[bend right, ->-] (14,0) to (10,0);
					\draw[bend left, ->-] (6,0) to (10,0);
					\foreach \x/\y in {4.5/4,6/5,8/7,8/9,12/11,14/13,16/15,17/18}
					{		\draw[->-] (\x,0)--(\y,0);		}	
				\end{scope}
	\end{tikzpicture}
	\caption{if $i < j<i' $.}
		\label{fig:pro:7:2}
	\end{subfigure}
	\begin{subfigure}{1\textwidth}
	\centering
		\begin{tikzpicture}[scale=0.8]

				\foreach \x/\y in {6/x,16/x'}
					{
					\filldraw[black] (\x,1.5) circle (3pt);
					\node at (\x,1.9) {${\y}$};
					}
				\foreach \x/\y in {3/1,10/2i,11/2i+1,14/2i',15/2i'+1,5/2j-1,6/2j,7/2j+1,17.5/2\ell+1}
					{
					\filldraw[black] (\x,0) circle (3pt);
					\node at (\x,-0.4) {$v_{\y}$};
					}
				\foreach \x/\y in {6/5,6/7,10/11,14/15}
					{		\draw[->-,thin] (\x,0)--(\y,0);		}
					
				\begin{scope}[line width =1.5pt]
					\draw[->-] (6,1.5)--(3,0);
					\draw[->-] (6,1.5)--(7,0);
					\draw[->-] (16,1.5)--(11,0);
					\draw[->-] (16,1.5)--(15,0);
					\draw (3,0)--(5,0);
					\draw (7,0)--(10,0);
					\draw (15,0)--(17.5,0);
					\draw (11,0)--(14,0);
					\draw[bend right, ->-] (14,0) to (6,0);
					\draw[bend right, ->-] (10,0) to (6,0);
					\foreach \x/\y in {4/3,4/5,8/7,10/9,12/11,14/13,16/15,16.5/17.5}
					{		\draw[->-] (\x,0)--(\y,0);		}	
				\end{scope}
	\end{tikzpicture}
	\caption{if $j < i<i' $.}
		\label{fig:pro:7:3}
	\end{subfigure}

	\caption{$i$ and $i'$ are $V_0$-surplus and $v_{2j} \in N^+(v_{2i}) \cap N^+(v_{2i'})$.}
	\label{fig:pro:7}
\end{figure}
\end{proof}

The following lemma shows that, if the length of a longest antipath is fewer than~$2k-1$, then each of the two endvertices of the longest antipath can be replaced independently by at least one other vertex.
This structural result is essential to the proof of our main theorem. Since its proof is rather involved, we defer it to Section~\ref{sec:lemmaproof}.

\begin{lemma} \label{lem:twofatends}
Let $k \in \mathbb{N}$ and let $G$ be an oriented graph with $\pd(G) \ge k$.
Suppose that a longest antipath in~$G$ has length~$2\ell+1 < 2k-1$.
Then there exists an antipath blow-up $V_0v_1\dots v_{2\ell}V_{2\ell+1}$ of length $2 \ell +1$ with $|V_0|, |V_{2\ell+1}| \ge 2$.
\end{lemma}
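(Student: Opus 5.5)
We argue by extremality over longest antipaths. Among all longest antipaths $P=v_0v_1\dots v_{2\ell+1}$ with $v_0v_1\in E(G)$, we consider the sets
\[ V_0=\{x\notin V(P)\setminus\{v_0\} : xv_1\in E(G)\}, \qquad V_{2\ell+1}=\{y\notin V(P)\setminus\{v_{2\ell+1}\} : v_{2\ell}y\in E(G)\}. \]
Every $x\in V_0$ and every $y\in V_{2\ell+1}$ gives an antipath $xv_1\dots v_{2\ell}y$. The only obstruction to this being a blow-up is that we may have $x=y$. So first we show $|V_0|\ge 2$ for a suitable choice of $P$, and symmetrically (by reversing $G$) $|V_{2\ell+1}|\ge 2$. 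Then we must make both hold simultaneously, with $V_0\cap V_{2\ell+1}=\emptyset$ after deleting possible common vertices.

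\textbf{Step 1: one fat end.} Suppose $V_0=\{v_0\}$. Since $d^-(v_1)\ge 1$, we have $d^-(v_1)\ge k$. Every in-neighbour of $v_1$ other than $v_0$ lies on $P$. In-neighbours of $v_1$ on $P$ at even positions $v_{2j}$ (with $j\ge 1$) let us rotate: $v_{2j-1}\dots v_1 v_{2j}\dots v_{2\ell+1}$ has length $2\ell+1$ only if we also use $v_0$. So we instead look at an in-neighbour $v_{2j+1}$ of $v_1$ at an odd position. This gives the antipath $v_{2j}\dots v_1 v_{2j+1}\dots$ of a different shape, and parity forces most in-neighbours to sit at positions where a rerouting produces a new longest antipath whose first vertex lies off the old path. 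Concretely, we use $d^-(v_1)\ge k>\ell+1$ together with the count that at most $\ell$ positions of each parity are available. This gives an in-neighbour $v_{2j}$ of $v_1$ with $v_{2j-1}$ (or $v_{2j+1}$) re-attachable, yielding a longest antipath $P'$ whose end-set $V_0(P')$ contains two vertices: the old endpoint and a new one. Lemma~\ref{lem:cycle} is used to exclude the closing-up case (no anticycle of length $2\ell+2$), and Lemma~\ref{lem:oddlength} keeps the parity.

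\textbf{Step 2: both ends simultaneously.} Choose $P$ lexicographically maximising $(\min\{|V_0|,|V_{2\ell+1}|\},\,|V_0|+|V_{2\ell+1}|)$. Suppose $|V_0|\ge 2$ but $|V_{2\ell+1}|=1$. Then we apply the Step 1 rerouting at the right end. The rerouting changes vertices only near the right end (a segment $v_{2j}\dots v_{2\ell+1}$ is reversed or rotated), and it keeps $v_1$ fixed provided $j\ge 1$. We must check that $V_0$ stays large for the new path. Here Lemma~\ref{lem:property} for the blow-up $V_0v_1\dots v_{2\ell+1}$ is the tool. Part~\ref{itm:property:a} gives $N^+(V_0)\subseteq\{v_1,\dots,v_{2\ell}\}$, so the vertices of $V_0$ are not used by the rerouting and remain valid starting vertices. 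Parts~\ref{itm:property:b},~\ref{itm:property:c} and~\ref{itm:property:f} control which edges $v_{2i}v_{2\ell+1}$ are present, and this prevents the rerouted segment from destroying the structure at $v_1$. Finally, overlaps $V_0\cap V_{2\ell+1}$ are excluded: a common vertex $z$ would give the anticycle $zv_1\dots v_{2\ell}z$ of length $2\ell+1$, which is odd and hence impossible for an anticycle. More simply, $z$ would have both an out-edge to $v_1$ and an in-edge from $v_{2\ell}$; replacing both ends by distinct choices then uses the sizes $\ge 2$.

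\textbf{Main obstacle.} The hardest part is Step 2: guaranteeing that the rerouting which fattens the right end does not collapse the left end. In particular, the case where the rerouting passes through $v_1$ needs care, which happens when the relevant in-neighbour index $j$ of $v_{2\ell}$ is small. I would try first to choose, among the $\ge k$ out-neighbours of $v_{2\ell}$ on $P$, one with index far from $1$, using the degree surplus $k-\ell\ge 2$. If that fails, I would handle the remaining configurations by explicit case analysis, exhibiting an antipath of length $2\ell+2$ in each case.
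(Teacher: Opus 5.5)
There is a genuine gap in Step~2. You rightly call it the heart of the lemma, but you give no working mechanism for it. Your extremal choice is essentially the right one: with a thin right end it amounts to maximising $|V_0|$, as in the paper. The problem is what the rotation gives you. The right-end rotation along an edge $v_{2\ell}v_{2j-1}$ is $V_0v_1\dots v_{2j-1}v_{2\ell}\dots v_{2j+1}v_{2j}W^+$ with $W^{\pm}=N^{\pm}(v_{2j})\setminus V'$. For an arbitrary pivot this gives you nothing usable:
\begin{itemize}
\item $v_{2j}$ may have in-degree $0$;
\item $W^+$ may meet $V_0$;
\item even when $d^-(v_{2j})\ge k$, the degree count only yields $|W^+|+|W^-|\ge 2(k-\ell)+1$.
\end{itemize}
If $|W^+|\le 1$, the fallback blow-up is $(W^-\setminus\{x\})v_{2j}xv_1\dots v_{2j-1}v_{2\ell}\dots v_{2j+1}$, which additionally needs some $x\in V_0$ with $xv_{2j}\in E(G)$. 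Its left end has size at least $2(k-\ell)-1$. That bound is independent of $|V_0|$, so it contradicts maximality only when $|V_0|$ is small.

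A pivot at an even position (an edge $v_{2\ell}v_{2j}$ together with $xv_{2j+1}$, $x\in V_0$) only yields $Uv_{2j}v_{2\ell}\dots v_{2j+1}xv_1\dots v_{2j-1}$ with $U\subseteq N^-(v_{2j})$. This uses $x$ as an interior vertex, so it replaces the left end instead of keeping it. Choosing the pivot ``far from $v_1$'' addresses none of this; $v_1$ is not where the difficulty lies. Lemma~\ref{lem:property}\ref{itm:property:a} only tells you that $V_0$ survives as a left end, not that a second right end appears.

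What is missing is a gain on top of $2(k-\ell)$ that grows with $|V_0|$. The paper obtains it in three steps:
\begin{itemize}
\item It compares $e(V_0,V')\ge k|V_0|$ with Lemma~\ref{lem:property}\ref{itm:property:a} and \ref{itm:property:c}--\ref{itm:property:e}. This shows that the set $I$ of $V_0$-surplus indices $i$ with $V_0\subseteq N^-(v_{2i-2})$ has $|I|\ge|V_0|$.
\item A second double count, of $e(V_0,\{v_{2i},v_{2i+1}\})+d^+(v_{2\ell},\{v_{2i-1},v_{2i}\})$, produces a pivot $i_0$ with $v_{2\ell}v_{2i_0-1},v_{2\ell}v_{2i_0}\in E(G)$ and $V_0\subseteq N^-(v_{2i_0})$. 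For this it first excludes the even-pivot configuration (Claim~\ref{clm:peven}) and a $V_0$-surplus $i_0$. Each exclusion produces either an antipath of length $2\ell+2$ or a left end larger than $|V_0|$, via parts \ref{itm:property:g}, \ref{itm:property:i} and \ref{itm:property:j}.
\item The inclusion $V_0\subseteq N^-(v_{2i_0})$ makes $W^+\cap V_0=\emptyset$ automatic and supplies the vertex $x$. The paper then shows $v_{2i_0}$ is non-adjacent to every $v_{2i-2}$, $i\in I$, since adjacency again gives a fatter left end. This lifts the bound to $|W^-|\ge|I|+3\ge|V_0|+3$ when $|W^+|\le 1$, which contradicts maximality.
\end{itemize}

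There are also two smaller errors.
\begin{itemize}
\item Your Step~1 rerouting through an odd-position in-neighbour $v_{2j+1}$ of $v_1$ is not an antipath: $v_{2j+1}$ would receive from $v_{2j+2}$ and send to $v_1$. You should instead cite Lemma~\ref{lem:onefatend}, which pairs an out-neighbour $v_{p+1}$ of $v_0$ with an out-neighbour $v_p$ of $v_{2\ell}$.
\item A common vertex $z$ of your two end sets yields no anticycle, since $z$ has an in-edge and an out-edge on $zv_1\dots v_{2\ell}z$. So overlaps are not excluded, and two overlapping end sets of size $2$ cannot be split into disjoint ends of size $2$.
\end{itemize}
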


We now prove Theorem~\ref{thm:pseudo} assuming Lemma~\ref{lem:twofatends}.

\begin{proof}[Proof of Theorem~\ref{thm:pseudo}]
If $k=1$, then the result is immediate.
Hence we may assume $k\ge2$.
Let $m$ be the length of a longest antipath in~$G$.
We may assume that $m < 2k-1$ or else we are done.
By Lemma~\ref{lem:oddlength}, we have $m = 2 \ell +1$ for some $ \ell < k-1$.
On the other hand $\pd(G) \ge k \ge 2$, so $m \ge 2$ and moreover $\ell \ge 1$.
By Lemma~\ref{lem:twofatends}, there exists an antipath blow-up $V_0v_1\dots v_{2\ell}V_{2\ell+1}$ of length $2 \ell +1$ with $|V_0|, |V_{2\ell+1}| \ge 2$.

For $i \in [\ell-1]$, set
\begin{align*}
	\phi(i) = e(V_0,\{v_{2i} , v_{2i+1}\})+e(\{v_{2i}, v_{2i+1}\},V_{2\ell+1}).
\end{align*}
We say that $i$ is \emph{surplus} if $\phi(i) \ge |V_0|+|V_{2\ell+1}|+1$.
If $i$ is surplus, then $i$ is $V_0$-surplus or $V_{2\ell+1}$-surplus.
Let $V' = \{v_j \colon j \in [2\ell] \}$.

In the next claim, we study some properties of being surplus.

\begin{claim} \label{clm:surplus}
Suppose that $i \in [\ell-1]$ is surplus.
Then the following statements hold:
\begin{enumerate}[label={\rm (\alph*)}]
	\item \label{itm:surplus:0}
	$i$ is either $V_0$-surplus or $V_{2\ell+1}$-surplus;
	\item \label{itm:surplus:1}
	if $i$ is $V_0$-surplus, then $\phi(i-1)+\phi(i) \le 2(|V_0|+|V_{2\ell+1}|)$;
	\item \label{itm:surplus:2}
	if $i$ is $V_{2\ell+1}$-surplus, then $\phi(i) +\phi(i+1) \le 2(|V_0|+|V_{2\ell+1}|)$;
	\item \label{itm:surplus:3}
	if $i$ is $V_{0}$-surplus and $i-2$ is both surplus and $V_{2\ell+1}$-surplus, then
	$\phi(i)+\phi(i-1)+\phi(i-2) \le 3(|V_0|+|V_{2\ell+1}|)$.
\end{enumerate}
\end{claim}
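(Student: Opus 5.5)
The plan is to prove the four parts in order. Each follows from Lemma~\ref{lem:property} and its reversed version, plus at most one explicit antipath construction. Write $S=|V_0|+|V_{2\ell+1}|$.

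Two observations are used throughout. First, for every $y\in V_{2\ell+1}$, $V_0v_1\dots v_{2\ell}y$ is an antipath blow-up with $|V_0|\ge 2$. The notion of $V_0$-surplus depends only on $V_0$ and $v_1,\dots,v_{2\ell}$, so every conclusion of Lemma~\ref{lem:property} holds for each choice of $v_{2\ell+1}=y\in V_{2\ell+1}$. Second, symmetrically, the reversed statements hold for $x v_1\dots v_{2\ell}V_{2\ell+1}$ for every $x\in V_0$.

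\emph{Part (a).} If $i$ is surplus, then one of the two summands of $\phi(i)$ exceeds its trivial bound, so $i$ is $V_0$-surplus or $V_{2\ell+1}$-surplus. Suppose it were both.
\begin{itemize}
\item By Lemma~\ref{lem:property}\ref{itm:property:c}, $V_0\subseteq N^-(v_{2i+1})$.
\item By the reversed \ref{itm:property:c}, $N^-(v_{2i+1})\subseteq V'$.
\end{itemize}
Since $V_0\cap V'=\emptyset$, this is a contradiction.

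\emph{Part (b).} Let $i$ be $V_0$-surplus. By (a) it is not $V_{2\ell+1}$-surplus, and by \ref{itm:property:c} we get $\phi(i)\le S+1$. Also \ref{itm:property:c} gives $i\ge 2$, so $i-1\in[\ell-1]$. For $\phi(i-1)$:
\begin{itemize}
\item \ref{itm:property:c} gives $v_{2i-1}\notin N^+(V_0)$, so $e(V_0,\{v_{2i-2},v_{2i-1}\})=e(V_0,v_{2i-2})\le |V_0|$.
\item \ref{itm:property:f}, applied with every $y\in V_{2\ell+1}$ as the last vertex, gives $v_{2i-2}y\notin E(G)$. Hence $e(\{v_{2i-2},v_{2i-1}\},V_{2\ell+1})=e(v_{2i-1},V_{2\ell+1})\le |V_{2\ell+1}|$.
\end{itemize}
So $\phi(i-1)\le S$. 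If $\phi(i-1)+\phi(i)\ge 2S+1$, equality must hold everywhere, so $V_0\subseteq N^-(v_{2i-2})$ and $v_{2i-1}y\in E(G)$ for some $y\in V_{2\ell+1}$.

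Take $x_i$ to be the unique in-neighbour of $v_{2i}$ in $V_0$ and any $x\in V_0\setminus\{x_i\}$. Then
\[
v_{2i-1}\,y\,v_{2\ell}\dots v_{2i+1}\,x\,v_1\dots v_{2i-2}\,x_i\,v_{2i}
\]
is an antipath of length $2\ell+2$, which is a contradiction. Its edge directions are consistent: $v_{2i-1}$ and $x_i$ both have only out-edges in it. Thus $\phi(i-1)+\phi(i)\le 2S$.

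\emph{Part (c).} This is exactly (b) applied to the reverse of $G$. Reversing $G$ and the order of the blow-up swaps the roles of $V_0$ and $V_{2\ell+1}$ and sends the pair $\{v_{2i},v_{2i+1}\}$ to the pair with index $\ell-i$. So no new work is needed.

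\emph{Part (d).} This is the main obstacle, since (b) and (c) only yield $3S+1$. Applying (b) to $i$ and (c) to $i-2$ gives $\phi(i)+\phi(i-1)\le 2S$ and $\phi(i-2)+\phi(i-1)\le 2S$. Each of $\phi(i),\phi(i-2)$ is at most $S+1$ by \ref{itm:property:c} and its reverse. Therefore the only configuration to exclude is $\phi(i)=\phi(i-2)=S+1$ and $\phi(i-1)=S-1$.

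Here $\phi(i-1)=e(V_0,v_{2i-2})+e(v_{2i-1},V_{2\ell+1})$, for two reasons:
\begin{itemize}
\item $v_{2i-1}\notin N^+(V_0)$ by \ref{itm:property:c} for $i$.
\item $v_{2i-2}\notin N^-(V_{2\ell+1})$ by the reversed \ref{itm:property:c} for $i-2$, since $v_{2(i-2)+2}=v_{2i-2}$.
\end{itemize}
So $\phi(i-1)=S-1$ means that either $V_0\subseteq N^-(v_{2i-2})$ or $v_{2i-1}$ sends edges to all of $V_{2\ell+1}$, while the other side misses at most one vertex.

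I would first try to reuse the path from (b) in the first case. It needs only some $y$ with $v_{2i-1}y\in E(G)$ together with $x_iv_{2i-2}\in E(G)$. This fails only if $v_{2i-1}$ has no out-neighbour in $V_{2\ell+1}$, which is impossible once $e(v_{2i-1},V_{2\ell+1})\ge |V_{2\ell+1}|-1\ge 1$ and $x_iv_{2i-2}\in E(G)$. The mirror construction, from the reversed (c) at $i-2$, handles the case where $v_{2i-1}$ sees all of $V_{2\ell+1}$ and some $x\in V_0$ has $xv_{2i-2}\in E(G)$.

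The remaining subcase is when the unique $x_i$ (respectively the unique $y_{i-2}$ from the reversed (c)) is precisely the missing neighbour. There I would use the structure forced at both ends: $V_{2\ell+1}\subseteq N^+(v_{2i-4})$, and $v_{2i-3}$ has a unique out-neighbour in $V_{2\ell+1}$. With these, I expect a variant path, running through $v_{2i-4}$ and $v_{2i-3}$ instead of $v_{2i-2}$, to give an antipath of length $2\ell+2$ and hence a contradiction. Checking these few subcases is where most of the effort will go.
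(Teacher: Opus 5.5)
Part~(a) is correct, and your argument is cleaner than the paper's: $V_0\subseteq N^-(v_{2i+1})$ from Lemma~\ref{lem:property}\ref{itm:property:c} directly contradicts $N^-(v_{2i+1})\subseteq V'$ from its reversed version. Part~(c) by reversal is also fine. The proof breaks at the antipath you give in~(b), and this propagates to~(d).

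\textbf{The path in (b) is not an antipath.} In the segment $x\,v_1\dots v_{2i-2}$, the vertex $v_{2i-2}$ carries the out-edge $v_{2i-2}v_{2i-3}$. The next edge of your sequence is $x_iv_{2i-2}$, so $x_i\,v_{2i-2}\,v_{2i-3}$ is a directed path of length~$2$. There is also a quick parity check: an antipath of even length $2\ell+2$ has both end-vertices of the same type. Your sequence starts with the out-edge $v_{2i-1}y$ and ends with the in-edge $x_iv_{2i}$, so it cannot be one.

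The equality case you derived gives more than you used. It gives $V_0\subseteq N^-(v_{2i-2})$ \emph{and} $V_{2\ell+1}\subseteq N^+(v_{2i-1})$, and both are needed. The working path makes $v_{2i-2}$ a sink between two vertices of $V_0$ and does not use $v_{2i}$ at all. It recovers the length from a second out-neighbour of $v_{2i-1}$:
\[
v_{2i-3}\dots v_1\,x\,v_{2i-2}\,x'\,v_{2i+1}\dots v_{2\ell}\,y\,v_{2i-1}\,y' ,
\]
with $x\ne x'$ in $V_0$, $y\ne y'$ in $V_{2\ell+1}$, and $x'v_{2i+1}\in E(G)$ by \ref{itm:property:c}. ``Some $y$'' does not suffice.

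\textbf{Consequently (d) is not proved.} Your first case rests on the false claim that the (b)-path needs only one $y$ together with $x_iv_{2i-2}\in E(G)$. The remaining subcase is left as an expectation. With the correct path, the real obstruction when $\phi(i-1)=|V_0|+|V_{2\ell+1}|-1$ is that the deficient side has size exactly~$2$. For example, $|V_{2\ell+1}|=2$ and $v_{2i-1}$ has a single out-neighbour there. Then no vertex of $V_0\cup V'\cup V_{2\ell+1}$ supplies the missing length, and you need one from outside.

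The missing ingredient is Lemma~\ref{lem:property}\ref{itm:property:i}, applied to the reverse of~$G$ at the $V_{2\ell+1}$-surplus index $i-2$. It gives at least four out-neighbours of $v_{2i-3}$ outside $V_{2\ell+1}\cup V'$. Let $y_{i-2}$ be the unique out-neighbour of $v_{2i-3}$ in $V_{2\ell+1}$; also $V_{2\ell+1}\subseteq N^+(v_{2i-4})$, both by the reversed \ref{itm:property:c}. Take $y\in V_{2\ell+1}\setminus\{y_{i-2}\}$, and let $z$ be one of those outside out-neighbours with $z\notin\{x,x'\}$. If $x,x'\in V_0$ are distinct in-neighbours of $v_{2i-2}$, then
\[
z\,v_{2i-3}\,y_{i-2}\,v_{2i-4}\dots v_1\,x\,v_{2i-2}\,x'\,v_{2i+1}\dots v_{2\ell}\,y
\]
is an antipath of length $2\ell+2$. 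Since $v_{2i-1}\notin N^+(V_0)$, this gives $e(V_0,\{v_{2i-2},v_{2i-1}\})\le 1$. The mirror argument gives $e(\{v_{2i-2},v_{2i-1}\},V_{2\ell+1})\le 1$. Hence $\phi(i-1)\le 2\le |V_0|+|V_{2\ell+1}|-2$ outright, with no case analysis.

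Your sketch identifies the right structure at $v_{2i-4}$ and $v_{2i-3}$. Without the extra vertex $z$, however, a path routed through them is one vertex short.
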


\begin{proofclaim}
Suppose that $i$ is both $V_0$-surplus and surplus.
By Lemma~\ref{lem:property}\ref{itm:property:c} and~\ref{itm:property:f}, we have $e(V_0,\{v_{2i},v_{2i+1}\})=|V_0|+1$ and $d^+(v_{2i},V_{2\ell+1})=0$, respectively.
Hence we have
\begin{align*}
|V_0| +|V_{2\ell+1}|+1 & \le \phi(i) = e(V_0,\{v_{2i},v_{2i+1}\}) +d^+(v_{2i},V_{2\ell+1}) +d^+(v_{2i+1},V_{2\ell+1}) \\
& = |V_0| +1+0+d^+(v_{2i+1},V_{2\ell+1})\le |V_0|+1 +|V_{2\ell+1}|.
\end{align*}
It follows that $d^+(v_{2i+1},V_{2\ell+1})=|V_{2\ell+1}|$ and
\begin{align}
	\phi(i) = |V_0|+|V_{2\ell+1}|+1. \label{eqn:phi(i)}
\end{align}
In particular, $e(\{v_{2i},v_{2i+1}\},V_{2\ell+1})=|V_{2\ell+1}|$,
so $i$ is not $V_{2\ell+1}$-surplus, implying~\ref{itm:surplus:0}.

Again, by Lemma~\ref{lem:property}\ref{itm:property:c} and~\ref{itm:property:f}, we have $v_{2i-1} \notin N^+(V_{0})$ and $v_{2i-2} \notin N^-(V_{2\ell+1})$, respectively.
Therefore $\phi(i-1)=e(V_0,v_{2i-2})+e(v_{2i-1},V_{2\ell+1})$.
If $\phi(i-1) \ge  |V_0|+|V_{2\ell+1}|$, then $e(V_0,v_{2i-2})=|V_0|$ and $e(v_{2i-1},V_{2\ell+1})=|V_{2\ell+1}|$. Since $|V_0|, |V_{2\ell+1}| \ge 2$, there exist distinct $x,x' \in V_0 \cap N^{-}(v_{2i-2})$ and $y,y' \in V_{2\ell+1} \cap N^{+}(v_{2i-1})$.
However, $v_{2i-3} \dots v_1 x v_{2i-2} x' v_{2i+1} \dots v_{2\ell} y v_{2i-1} y'$ is an antipath of length $2\ell +2$ (see Figure~\ref{fig:surplus:1}), a contradiction.
Thus  $\phi(i-1)\le  |V_0|+|V_{2\ell+1}|-1$.
Together with~\eqref{eqn:phi(i)}, \ref{itm:surplus:1} holds.
Note that \ref{itm:surplus:2} is proved analogously by considering the reverse of~$G$.

We now prove~\ref{itm:surplus:3}.
Suppose that both $i$ and $i-2$ are surplus, $i$ is $V_{0}$-surplus, and $i-2$ is $V_{2\ell+1}$-surplus.
By Lemma~\ref{lem:property}\ref{itm:property:c} applied to the reverse of $G$, there exists a unique $y_{i-2}\in V_{2\ell+1}$ such that $v_{2i-3}y_{i-2}, v_{2i-4}y_{i-2}\in E(G)$.
We claim that $e(V_0,\{v_{2i-2},v_{2i-1}\})\le 1$.
Suppose not and recall that $V_0\cap N^-(v_{2i-1})=\emptyset$ by Lemma~\ref{lem:property}\ref{itm:property:c}.
Then there exist distinct $x,x'\in V_0\cap N^-(v_{2i-2})$.
Since $|V_{2\ell+1}|\ge2$, there exists $y\in V_{2\ell+1}\setminus\{y_{i-2}\}$.
Since $|N^+(v_{2i-3})\setminus(V_{2\ell+1}\cup V')|\ge 4$ by Lemma~\ref{lem:property}\ref{itm:property:i} (applied to reverse of~$G$), there exists $z\in N^+(v_{2i-3})\setminus (V_{2\ell+1}\cup V'\cup\{x,x'\})$.
Since $i$ is $V_0$-surplus, Lemma~\ref{lem:property}\ref{itm:property:c} implies that $x'v_{2i+1}\in E(G)$.
Then $zv_{2i-3}y_{i-2}v_{2i-4}\dots v_1xv_{2i-2}x'v_{2i+1}\dots v_{2\ell}y$ is an antipath of length $2\ell+2$ (see Figure~\ref{fig:surplus:2}), a contradiction.
Therefore $e(V_0,\{v_{2i-2},v_{2i-1}\})\le1$.
By a similar argument (for the reverse of~$G$), we deduce that $e(\{v_{2i-2},v_{2i-1}\},V_{2\ell+1})\le1$.
Hence $\phi(i-1) \le 2 \le |V_0|+|V_{2\ell+1}|-2$.
Together with~\eqref{eqn:phi(i)} applied to the indices $i$ and $i-2$, \ref{itm:surplus:3} follows.
\begin{figure}
			\centering
	\begin{subfigure}{\textwidth}
		\centering
		\begin{tikzpicture}[scale=0.8]

				\foreach \x/\y in { 5/x,7/x'}
					{
					\filldraw[black] (\x,1.5) circle (3pt);
					\node at (\x,1.9) {${\y}$};
					}
				\foreach \x/\y in {6.5/y',8/y}
					{
					\filldraw[black] (\x,-1.5) circle (3pt);
					\node at (\x,-1.9) {${\y}$};
					}
			
				\foreach \x/\y in { 1/1,5/2i-3,6/2i-2,8/2i,9/2i+1,13/2\ell}
					{
					\filldraw[black] (\x,0) circle (3pt);
					\node at (\x,-0.4) {$v_{\y}$};
					}
					\filldraw[black] (7,0) circle (3pt);	
					\node at (7,0.4) {$v_{2i-1}$};

				\foreach \x/\y in {6/5,6/7,8/9,8/7}
					{		\draw[->-,thin] (\x,0)--(\y,0);		}
					
				\begin{scope}[line width = 1.5pt]
					\draw[->-] (5,1.5)--(1,0);
					\draw[->-] (5,1.5)--(6,0);
					\draw[->-] (7,1.5)--(6,0);
					\draw[->-] (7,1.5)--(9,0);
					\draw[->-] (13,0)--(8,-1.5);
					\draw[->-] (7,0)--(8,-1.5);
					\draw[->-] (7,0)--(6.5,-1.5);
					\draw (1,0)--(5,0);
					\draw (9,0)--(13,0);
					\foreach \x/\y in {2/1,4/5,10/9,13/12}
					{		\draw[->-] (\x,0)--(\y,0);		}	
				\end{scope}
	\end{tikzpicture}
	\caption{if $\phi(i-1)  + \phi(i)> 2  |V_0| + 2 |V_{2\ell+1}|$.}
	\label{fig:surplus:1}
	\end{subfigure}
	\begin{subfigure}{\textwidth}
		\centering
		\begin{tikzpicture}[scale=0.8]

				\foreach \x/\y in { 5/x,9/x'}
					{
					\filldraw[black] (\x,1.5) circle (3pt);
					\node at (\x,1.9) {${\y}$};
					}
				\foreach \x/\y in { 11/y,4/y_{i-2},6/z}
					{
					\filldraw[black] (\x,-1.5) circle (3pt);
					\node at (\x,-1.9) {${\y}$};
					}
			
				\foreach \x/\y in { 1/1,6/2i-2,7/2i-1,8/2i,9/2i+1,13/2\ell}
					{
					\filldraw[black] (\x,0) circle (3pt);
					\node at (\x,-0.4) {$v_{\y}$};
					}
					\filldraw[black] (4,0) circle (3pt);	
					\node at (4,0.4) {$v_{2i-4}$};
					\filldraw[black] (5,0) circle (3pt);	
					\node at (5,0.4) {$v_{2i-3}$};

				\foreach \x/\y in {6/5,6/7,8/9,8/7,4/5}
					{		\draw[->-,thin] (\x,0)--(\y,0);		}
					
				\begin{scope}[line width = 1.5pt]
					\draw[->-] (5,1.5)--(1,0);
					\draw[->-] (5,1.5)--(6,0);
					\draw[->-] (9,1.5)--(6,0);
					\draw[->-] (9,1.5)--(9,0);
					\draw[->-] (13,0)--(11,-1.5);
					\draw[->-] (4,0)--(4,-1.5);
					\draw[->-] (5,0)--(4,-1.5);
					\draw[->-] (5,0)--(6,-1.5);
					\draw (1,0)--(4,0);
					\draw (9,0)--(13,0);
					\foreach \x/\y in {2/1,4/3,10/9,13/12}
					{		\draw[->-] (\x,0)--(\y,0);		}	
				\end{scope}
	\end{tikzpicture}
	\caption{if $\phi(i-2)+\phi(i-1)+\phi(i) >  3|V_0|+3|V_{2\ell+1}|$.}
	\label{fig:surplus:2}
	\end{subfigure}
	\caption{antipaths considered in the proof of Claim~\ref{clm:surplus}.}
	\label{fig:surplus}
\end{figure}
\end{proofclaim}

Let $I$ be the set of $i \in [\ell]$ that are surplus and $V_0$-surplus and let $I^-=\{i-1 \colon i \in I\}$.
Similarly, let $J$ be the set of $j \in [\ell]$ that are surplus and $V_{2\ell+1}$-surplus and let $J^+ =\{j+1 \colon j \in J\}$.
By Lemma~\ref{lem:property}\ref{itm:property:c} to both~$G$ and the reverse of~$G$, we have $I \subseteq [2,\ell-1]$ and $J \subseteq [1,\ell-2]$.
By Claim~\ref{clm:surplus}, we have
\begin{align}
	I \cap J = \emptyset	\qquad \text{and} \qquad (I^- \cup J^+) \cap (I \cup J) = \emptyset.
	\label{eqn:AB}
\end{align}
For all $i \in I^- \cup  J^+$, set
\begin{align*}
	S_i =
	\begin{cases}
		\{i,i+1\} & \text{if $i \in I^- \setminus  J^+$},\\
		\{i-1,i\} & \text{if $i \in J^+ \setminus  I^-$},\\
		\{i-1, i,i+1\} & \text{if $i \in I^- \cap  J^+$}.
	\end{cases}
\end{align*}
By~\eqref{eqn:AB}, $\{S_i \colon i \in I^- \cup  J^+ \}$ is a set of disjoint intervals in~$[\ell-1]$.
Let $S = \bigcup\{S_i \colon i \in I^- \cup  J^+ \}$, so $I \cup J \subseteq S$.
Moreover, Claim~\ref{clm:surplus} implies that $\sum_{j\in S_i}\phi(j)\le |S_i|( |V_0| + |V_{2\ell+1}| )$ for all $i\in I^-\cup J^+$. Thus
\begin{align*}
	\sum_{j \in S} \phi(j) =
	\sum_{i \in I^- \cup  J^+ } \sum_{j \in S_i} \phi(j)
	\le \sum_{i \in I^- \cup  J^+ } |S_i|( |V_0| + |V_{2\ell+1}| )
	= |S|( |V_0| + |V_{2\ell+1}| ).
\end{align*}
Since $I\cup J\subseteq S$, for all $j\in[\ell-1]\setminus S$, $j$ is not surplus and so $\phi(j)\le ( |V_0| + |V_{2\ell+1}| )$.
Therefore,
\begin{align*}
	e(V_0,V')+e(V',V_{2\ell+1})
	& =  e(V_0,\{v_1,v_{2\ell}\})+e(\{v_1,v_{2\ell}\},V_{2\ell+1}) + \sum_{j \in S } \phi (j) +\sum_{j \in [\ell-1] \setminus S }  \phi (j) \\
	& \le \left( 2 + |S| + | [\ell-1] \setminus S | \right)( |V_0| + |V_{2\ell+1}| )
	= (\ell +1)( |V_0| + |V_{2\ell+1}| ) .
\end{align*}
On the other hand, by Lemma~\ref{lem:property}\ref{itm:property:a} (applied to both $G$ and the reverse of $G$), we have $ N^+(V_0), N^-(V_{2\ell+1}) \subseteq V'$.
By the assumption $\pd(G)\ge k$, we have $d^+(x)\ge k$ and $d^-(y)\ge k$ for all $x\in V_0$
and $y\in V_{2\ell+1}$.
Hence
\begin{align*}
	k ( |V_0| + |V_{2\ell+1}| )
	& \le \sum_{x \in V_0} d^+(x) + \sum_{y \in V_{2\ell+1}} d^-(y)
	= e(V_0,V')+e(V',V_{2\ell+1})
	 \le (\ell +1)( |V_0| + |V_{2\ell+1}| ).
\end{align*}
This implies that $\ell \ge k-1$, a contradiction.
\end{proof}


\section{Proof of Lemma~\ref{lem:twofatends}} \label{sec:lemmaproof}

We first show that there exists an antipath blow-up $V_0v_1\dots v_{2\ell}V_{2\ell+1}$ such that at least one of~$V_0$ and~$V_{2\ell+1}$ has size at least~$2$.
This result has already been proved by  Grzesik and Skrzypczyk~\cite[Lemma~10]{GS2025}.
We include its proof as it helps to illustrate some ideas used in the proof of Lemma~\ref{lem:twofatends}.

\begin{lemma} \label{lem:onefatend}
Let $k \in \mathbb{N}$ and let $G$ be an oriented graph with $\pd(G) \ge k$.
Suppose that a longest antipath in~$G$ has length~$2\ell+1 < 2k-1$.
Then there exists an antipath blow-up $V_0v_1\dots v_{2\ell}V_{2\ell+1}$ of length $2 \ell +1$ with $\max\{ |V_0|, |V_{2\ell+1}|\} \ge 2$.
\end{lemma}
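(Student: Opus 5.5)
The plan is to argue by contradiction via \emph{rotations} of a longest antipath. Suppose the conclusion fails. Then for \emph{every} longest antipath $P=v_0v_1\dots v_{2\ell+1}$ with $v_0v_1\in E(G)$, both ends are rigid:
\[
N^-(v_1)\setminus\{v_0\}\subseteq V(P)\qquad\text{and}\qquad N^+(v_{2\ell})\setminus\{v_{2\ell+1}\}\subseteq V(P).
\]
Indeed, a vertex $x\notin V(P)$ with $xv_1\in E(G)$ would give the blow-up $\{v_0,x\}v_1\dots v_{2\ell+1}$, and symmetrically at the other end. We may freely pass to the reverse of $G$, which swaps the two ends.

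\textbf{Step 1: neighbourhoods of the ends.} Fix a longest antipath $P$ as above, and let $V'=\{v_j\colon j\in[2\ell]\}$. As in the proof of Lemma~\ref{lem:property}\ref{itm:property:a}, maximality of $P$ together with Lemma~\ref{lem:cycle} gives $N^+(v_0)\subseteq V'$, and dually $N^-(v_{2\ell+1})\subseteq V'$. Now $v_0$ has positive out-degree, so $d^+(v_0)\ge k\ge \ell+2$. Hence $v_0$ sends at least $\ell+2$ edges into the $2\ell$ vertices of $V'$. Group $V'$ into the $\ell$ pairs $\{v_{2i-1},v_{2i}\}$ (or alternatively into the pairs $\{v_{2i},v_{2i+1}\}$). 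By pigeonhole, $v_0$ dominates both vertices of at least two pairs, and in particular has an out-neighbour $v_{2i+1}$ with $i\ge1$.

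\textbf{Step 2: rotation.} For each out-neighbour $v_{2i+1}$ of $v_0$ with $i\ge 1$, the walk
\[
Q_i=v_{2i}v_{2i-1}\dots v_1v_0v_{2i+1}\dots v_{2\ell+1}
\]
is again a longest antipath. In $Q_i$, the vertex $v_0$ becomes a source of degree two, and the new first edge is $v_{2i}v_{2i-1}$. Applying the rigidity assumption to $Q_i$ yields
\[
N^-(v_{2i-1})\setminus\{v_{2i}\}\subseteq V(P).
\]
Combining this with $d^-(v_{2i-1})\ge k$, I would derive constraints on where in-neighbours of the odd vertices can lie. I would iterate this over all rotations, and also over rotations at the other end via the reverse of $G$. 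The goal is to exhibit either an antipath of length $2\ell+2$ or an anticycle of length $2\ell+2$, the latter being excluded by Lemma~\ref{lem:cycle}. The constructions would be modelled on the path surgeries in the proof of Lemma~\ref{lem:property}\ref{itm:property:b} and~\ref{itm:property:g}. The case where $v_0$ dominates a whole pair $\{v_{2i},v_{2i+1}\}$ should feed directly into such surgeries, since it gives two parallel ways of entering the segment.

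\textbf{Main obstacle.} The hard step is closing the counting. Rigidity only says that in-neighbours of $v_1$ (and of the rotated second vertices) lie inside $P$; by itself this is not contradictory, because $V(P)$ has $2\ell+2\ge k$ vertices. So one must show that many positions of $P$ are \emph{forbidden} as in-neighbours of $v_1$. The natural candidates are even-indexed $v_{2j}$ with $v_{2j}v_1\in E(G)$ whose partner $v_{2j+1}$ is an out-neighbour of $v_0$ or of $v_{2\ell}$: such an edge should splice into a longer antipath or an anticycle of length $2\ell+2$. Odd-indexed in-neighbours should similarly be killed using $v_0$'s neighbourhood.

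I would first try to prove the inequality
\[
d^-(v_1)+d^+(v_0)\le 2\ell+1\quad\text{(or a similar combination involving both ends)},
\]
which would contradict $k\ge\ell+2$. The technique is a Bondy--Chv\'atal style pairing: show that for each $j$, at most one of ``$v_0\to v_{j+1}$'' and ``$v_j\to v_1$'' can hold. If such a clean pairing fails, I would fall back on choosing $P$ extremal among all rotations, for instance maximising $d^-(v_1,V(P))$, to force the needed rigidity.
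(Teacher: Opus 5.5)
Your preliminary observations are correct: the rigidity reformulation, $N^+(v_0)\subseteq V'$, $k\ge \ell+2$, and the rotations $Q_i$ all hold. But the proposal stops where the lemma's content begins. You never reach a contradiction, and the route you suggest for it cannot work. The pairing ``at most one of $v_0v_{j+1}$ and $v_jv_1$'' fails trivially at $j=0$. For even $j\ge 2$ the two edges are compatible with $P$ being longest: together they only give another rotation, $v_2v_3\dots v_jv_1v_0v_{j+1}\dots v_{2\ell+1}$, on the same vertex set. More generally, rotating at one end never changes $V(P)$ or the far end. As you note yourself, $V(P)$ has room for all in-neighbours of $v_1$. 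So no count confined to $v_0$, $v_1$ (or the rotated second vertices) inside $V(P)$ can close the argument.

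The missing idea is to couple the two ends and to discard $v_{2\ell+1}$.
\begin{itemize}
\item Work in $V^*=\{v_0,\dots,v_{2\ell}\}$, which has $2\ell+1$ vertices. Rigidity at the far end gives $d^+(v_{2\ell},V^*)\ge k-1$, since $v_{2\ell+1}$ now counts as outside.
\item The shifted set $\{v_i\colon v_{i+1}\in N^+(v_0)\}\subseteq V^*$ has size at least $k$. So the two sets share at least $2k-2\ell-2\ge 2$ vertices, at most one of which is $v_0$. This gives $p\in[2\ell-1]$ with $v_0v_{p+1},v_{2\ell}v_p\in E(G)$.
\item This crossing pair closes $V^*$ into an odd cycle $v_0\dots v_pv_{2\ell}\dots v_{p+1}v_0$. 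Exactly one vertex on it has one in-edge and one out-edge: $v_{p+1}$ if $p$ is odd, $v_p$ if $p$ is even. Take $p$ odd.
\item Deleting either cycle edge at $v_{p+1}$ gives an antipath of length $2\ell$ on $V^*$ with $v_{p+1}$ as an end. One is $v_{p+1}v_0\dots v_pv_{2\ell}\dots v_{p+2}$, with $v_{p+1}$ a sink end; the other is $v_0\dots v_pv_{2\ell}\dots v_{p+2}v_{p+1}$, with $v_{p+1}$ a source end.
\item Set $W^{\pm}=N^{\pm}(v_{p+1})\setminus V^*$. Since $v_{p+1}$ has in- and out-degree at least $k$ and only $2\ell$ other vertices lie in $V^*$, we get $|W^+|+|W^-|\ge 2k-2\ell\ge 4$.
\item Hence $W^-v_{p+1}v_0\dots v_pv_{2\ell}\dots v_{p+2}$ or $v_0\dots v_pv_{2\ell}\dots v_{p+1}W^+$ is the required blow-up.
\end{itemize}
The slack comes from freeing $v_{2\ell+1}$, which may now lie in $W^{\pm}$. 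Your fixed-far-end rotations never do this.
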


\begin{proof}
When $k=1$, the lemma is vacuously true, so we may assume that $k \ge2$.
Let $P = v_0v_1\dots v_{2\ell+1}$ be a longest antipath in~$G$, where the edges of $P$ are directed from even-indexed vertices to odd-indexed vertices.
Let $V^* = \{ v_j\colon j \in \{0\} \cup [2\ell] \}$. Since $2\ell+1<2k-1$, we have $2\ell+2<2k-1$.
Hence, by Lemma~\ref{lem:cycle}, $G$ does not contain an anticycle of length $2\ell+2$. We claim that
\begin{align*}
N^+(v_0)\subseteq V^*.
\end{align*}
Indeed, if $v_0$ has an out-neighbour $x\notin V(P)$, then $xv_0v_1\dots v_{2\ell+1}$
is an antipath of length $2\ell+2$, a contradiction. If $v_0v_{2\ell+1}\in E(G)$, then
$v_0v_1\dots v_{2\ell+1}v_0$ is an anticycle of length $2\ell+2$, again a contradiction.

\begin{claim} \label{clm:fatend}
There exists $p \in [2\ell-1]$ such that $v_0 v_{p+1 }, v_{2\ell} v_p \in E(G)$.
\end{claim}

\begin{proofclaim}
Let $U = N^+(v_{2\ell}) \setminus V^*$.
If $|U|\ge2$, then $v_0v_1\dots v_{2\ell}U$ is the desired antipath blow-up. Thus we may assume that $|U|\le 1$.
Since $d^+(v_{2\ell})>0$, we have $d^+(v_{2\ell})\ge k$.
Thus $d^+(v_{2\ell}, V^*) \ge k-1$.
Recall that $N^+(v_0) \subseteq V^* \setminus \{v_0\}$. In particular, $|\{v_i\colon v_{i+1}\in N^+(v_0)\}|=d^+(v_0)$.
Therefore,
\begin{align*}
	| \{v_i \colon v_{i+1} \in  N^+(v_0)  \} \cap  N^+( v_{2\ell} ) |
	& \ge d^+(v_0) + d^+(v_{2\ell}, V^*) - |V^*|\\
	& \ge k + k-1 - (2\ell+1) = 2k-2\ell - 2 \ge 2.
\end{align*}
At most one vertex in $\{v_i \colon v_{i+1} \in  N^+(v_0)  \} \cap  N^+( v_{2\ell} )$ is $v_0$. So the claim follows.
\end{proofclaim}

Let $p \in [2\ell-1]$ be such that $v_0 v_{p+1 }, v_{2\ell} v_p \in E(G)$, which exists by Claim~\ref{clm:fatend}.
Suppose that $p$ is odd.
Since $v_{p+1}v_p\in E(G)$ and $v_0v_{p+1}\in E(G)$, we have $d^+(v_{p+1}),d^-(v_{p+1})\ge k$.
Let
\begin{align*}
	W^+  = N^+(v_{p+1}) \setminus V^*
	\qquad \text{ and } \qquad
	W^-  = N^-(v_{p+1}) \setminus V^*.
\end{align*}
Note that
\begin{align*}
	|W^+| + |W^-| \ge  d^+(v_{p+1}) +  d^-(v_{p+1}) - |V^* \setminus \{v_{p+1}\}| \ge 2k - 2 \ell \ge 4.
\end{align*}
Hence at least one of $W^+$ and $W^-$ has size at least~$2$.
Therefore, one of $W^-v_{p+1}v_0\dots v_pv_{2\ell}\dots v_{p+2}$ or $v_0\dots v_pv_{2\ell}\dots v_{p+1}W^+$ is a desired antipath blow-up (see Figure~\ref{fig:fatend}).
\begin{figure}
	\centering
		\begin{subfigure}{0.475\textwidth}
			\centering
		\begin{tikzpicture}[scale=0.7]

					\draw[line width =1.5pt] (6,1.5) ellipse (1cm and 0.5cm);
					\node at (6,1.5) {$W^-$};

				\foreach \x/\y in { 1/0,5/p,6/p+1,7/p+2,11/2\ell}
					{
					\filldraw[black] (\x,0) circle (3pt);
					\node at (\x,-0.4) {$v_{\y}$};
					}
				\foreach \x/\y in {6/7,6/5}
					{		\draw[->-,thin] (\x,0)--(\y,0);		}
					
				\begin{scope}[line width =1.5pt]
					\draw[->-] (5,1.5)--(6,0);
					\draw[->-] (7,1.5)--(6,0);
					\draw (1,0)--(5,0);
					\draw (7,0)--(11,0);
					\draw[bend right=25, ->-] (11,0) to (5,0);
					\draw[bend left=25, ->-] (1,0) to (6,0);
					\foreach \x/\y in {1/2,11/10,4/5,8/7}
					{		\draw[->-] (\x,0)--(\y,0);		}	
				\end{scope}
	\end{tikzpicture}
	\caption{if $|W^-|\ge 2$.}
		\label{fig:fatend:1}
	\end{subfigure}
		\begin{subfigure}{0.475\textwidth}
			\centering
		\begin{tikzpicture}[scale=0.7]

					\draw[line width =1.5pt] (6,1.5) ellipse (1cm and 0.5cm);
					\node at (6,1.5) {$W^+$};

				\foreach \x/\y in { 1/0,5/p,6/p+1,11/2\ell}
					{
					\filldraw[black] (\x,0) circle (3pt);
					\node at (\x,-0.4) {$v_{\y}$};
					}
				\foreach \x/\y in {6/5,6/7}
					{		\draw[->-,thin] (\x,0)--(\y,0);		}
					
				\begin{scope}[line width =1.5pt]
					\draw[->-] (6,0)--(5,1.5);
					\draw[->-] (6,0)--(7,1.5);
					\draw (1,0)--(5,0);
					\draw (6,0)--(11,0);
					\draw[bend right=20, ->-] (11,0) to (5,0);
					\foreach \x/\y in {1/2,11/10,6/7,4/5}
					{		\draw[->-] (\x,0)--(\y,0);		}	
				\end{scope}
	\end{tikzpicture}
	\caption{if $|W^+|\ge 2$.}
		\label{fig:fatend:2}
	\end{subfigure}
		\caption{if $p$ is odd.}
	\label{fig:fatend}
\end{figure}

Suppose that $p$ is even.
(The proof is similar, we include here for completeness.)
Let $U^+=N^+(v_{p})\setminus V^*$ and $U^-=N^-(v_{p})\setminus V^*$.
As before,
\begin{align*}
|U^+|+|U^-|\ge d^+(v_{p})+d^-(v_{p})-|V^*\setminus\{v_{p}\}|
\ge 2k-2\ell\ge4.
\end{align*}
Hence at least one of $U^+$ and $U^-$ has size at least~$2$.
Therefore one of $ U^- v_{p} v_{2\ell} \dots v_{p+1} v_0 \dots v_{p-1}$ or $v_{2 \ell} \dots v_{p+1} v_0 \dots v_p U^+$ is a desired antipath blow-up.
\end{proof}

We now outline the ideas behind the proof of Lemma~\ref{lem:twofatends}.
Let $G$ be an oriented graph with $\pd(G) \ge k$ and the longest antipath in $G$ has length $2 \ell +1 < 2k-1$.
Let $V_0 v_1 \dots v_{2\ell+1}$ be an antipath blow-up with  $|V_0|$ maximal.
We aim to find $p \in [2\ell-3]$ such that
\begin{align*}
	V_0 \subseteq N^{-}(v_{p+1}) \qquad \text{ and } \qquad v_{2\ell}v_p \in E(G).
\end{align*}
We will show that such $p$ exists by considering $i \in [\ell-1]$ with $e(V_0, \{ v_{2i},v_{2i+1} \}) + d^+(v_{2\ell},  \{ v_{2i-1},v_{2i} \} ) \ge |V_0| +2$.
We will refer to such $i$ as being  excess.
For simplicity, suppose that $p= 2i_0-1$ is odd.
Let $V' = \{v_i \colon i \in [2\ell]\}$.
As in the proof of Lemma~\ref{lem:onefatend}, we consider the number of neighbours of $v_{p+1}=v_{2i_0}$ not in~$V' \cup V_0$.
If $|N^+(v_{2i_0}) \setminus (V'\cup V_0)| \ge 2$, then we will obtain the desired antipath blow-up (see Figure~\ref{fig:podd:1}).
If $|N^-(v_{2i_0}) \setminus (V'\cup V_0)| \ge |V_0|+1$, then we find another antipath blow-up that contradicts the maximality of~$|V_0|$ (see Figure~\ref{fig:podd:2}).
In order to prove that $v_{2i_0}$ has sufficient neighbours not in~$V' \cup V_0$, we show that $v_{2i_0}$ is not adjacent to many~$v_{2i}$ such that $i$ is $V_0$-surplus.

\begin{proof}[Proof of Lemma~\ref{lem:twofatends}]
By Lemma~\ref{lem:onefatend} and considering the reverse of~$G$ if necessary, we may assume that there exists an antipath blow-up $V_0v_1\dots v_{2\ell}V_{2\ell+1}$ with $|V_0| \ge 2$.
We further assume that the antipath blow-up is chosen so that $|V_0|$ is maximum. If $|V_{2\ell+1}|\ge2$, then we are done. Hence we may assume that $V_{2\ell+1}=\{v_{2\ell+1}\}$.

Let $V' = \{v_j \colon j \in [2\ell]\}$.
For $i  \in [\ell-1]$, we say that $i$ is \emph{excess} if
\begin{align*}
	e(V_0, \{v_{2i},v_{2i+1}\})+ d^+(v_{2\ell}, \{v_{2i-1},v_{2i}\}) \ge |V_0|+2.
\end{align*}

\begin{claim} \label{clm:i0}
There exists $i \in [\ell-1]$ that is excess.
\end{claim}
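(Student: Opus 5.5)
Let $V' = \{v_j \colon j \in [2\ell]\}$ as before. The plan is a double-counting argument: sum the excess quantity over all $i \in [\ell-1]$ and compare with the lower bounds coming from $\pd(G)\ge k$. Set
\begin{align*}
	\psi(i) = e(V_0,\{v_{2i},v_{2i+1}\}) + d^+(v_{2\ell},\{v_{2i-1},v_{2i}\}),
\end{align*}
and suppose for a contradiction that $\psi(i) \le |V_0|+1$ for all $i \in [\ell-1]$.

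\textbf{Lower bound.} By Lemma~\ref{lem:property}\ref{itm:property:a}, $N^+(V_0)\subseteq V'$. Since $V_0 \subseteq N^-(v_1)$ forces $d^+(x)\ge k$ for each $x \in V_0$, we get $e(V_0,V') \ge k|V_0|$.

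For $v_{2\ell}$, note that $d^+(v_{2\ell})\ge k$ because $v_{2\ell}v_{2\ell+1}\in E(G)$. Its out-neighbours outside $V' \cup V_0 \cup \{v_{2\ell+1}\}$ are few, else we could replace $V_{2\ell+1}$ by a set of size at least~$2$ and be done. More precisely, if $|N^+(v_{2\ell})\setminus (V'\cup V_0)|\ge 2$, then $V_0v_1\dots v_{2\ell}(N^+(v_{2\ell})\setminus(V'\cup V_0))$ is the desired blow-up. Also $v_{2\ell}$ cannot send an edge to $V_0$, since that would create a longer antipath or an anticycle of length $2\ell+2$, excluded by Lemma~\ref{lem:cycle}. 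Hence $d^+(v_{2\ell},V') \ge k-1$.

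\textbf{Upper bound.} The terms $\psi(i)$ for $i\in[\ell-1]$ cover $e(V_0,\{v_2,\dots,v_{2\ell-1}\})$ and $d^+(v_{2\ell},\{v_1,\dots,v_{2\ell-2}\})$. The remaining contributions are $e(V_0,\{v_1,v_{2\ell}\})\le 2|V_0|$ and $d^+(v_{2\ell},\{v_{2\ell-1}\})\le 1$. Also $v_{2\ell}$ has no out-neighbour among $v_{2\ell}$ itself, and its edge to $v_{2\ell-1}$ points into $v_{2\ell}$ ($v_{2\ell}v_{2\ell-1}\in E$ in the path), so that contribution is actually at most $1$ and likely counted. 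Summing,
\begin{align*}
	k|V_0| + k-1 \le \sum_{i\in[\ell-1]}\psi(i) + 2|V_0| + 1 \le (\ell-1)(|V_0|+1) + 2|V_0|+1 = (\ell+1)|V_0| + \ell.
\end{align*}
Since $\ell \le k-2$, the right side is at most $(k-1)|V_0|+k-2$. This is less than $k|V_0|+k-1$, a contradiction.

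\textbf{Main obstacle.} The step I expect to be the obstacle is the bookkeeping of the boundary terms at $v_1$, $v_{2\ell-1}$ and $v_{2\ell}$, and correctly excluding $v_{2\ell}\to V_0$ edges. The slack here is $|V_0|+1 \ge 3$, so even a loss of a couple of units is affordable. If the exclusion of $v_{2\ell}\to V_0$ is awkward, I would instead allow $|N^+(v_{2\ell})\setminus V'|\le 1$ plus $d^+(v_{2\ell},V_0)\le 1$ and absorb the extra loss into this slack.
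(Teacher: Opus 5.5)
\textbf{Verdict: same approach as the paper, with one step that is not justified as written (though the conclusion is true and easy to repair).}

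Your argument is the paper's double count: your $\psi$ is exactly the paper's $\phi$, and your bookkeeping of the boundary terms at $v_1$, $v_{2\ell-1}$ and $v_{2\ell}$ is correct. The problem is the step you flagged yourself, the claim that $v_{2\ell}$ sends no edge to $V_0$.

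\textbf{Why the stated reason fails.} An edge $v_{2\ell}x$ with $x\in V_0$ does not by itself give a longer antipath or an anticycle of length $2\ell+2$.
\begin{enumerate}
\item[(i)] Together with $xv_1\dots v_{2\ell}$ it closes a cycle of odd length $2\ell+1$, on which $x$ has both an in-edge and an out-edge, so it is not an anticycle.
\item[(ii)] For $x'\in V_0\setminus\{x\}$, the path $x'v_1\dots v_{2\ell}x$ has length only $2\ell+1$, not more.
\end{enumerate}
Your fallback is also unsupported. The bound $d^+(v_{2\ell},V_0)\le 1$ has no justification. The condition $|N^+(v_{2\ell})\setminus V'|\le 1$ would already force $d^+(v_{2\ell},V_0)=0$, because $v_{2\ell+1}$ lies in that set.

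\textbf{How to repair it.} The paper never excludes these edges. It uses $d^+(v_{2\ell},V'\cup V_0)\ge k-1$, which your $|U|\le 1$ argument gives. It then bounds $e(V_0,v_{2\ell})+e(v_{2\ell},V_0)\le |V_0|$ because $G$ is oriented. This reproduces exactly your inequality $k|V_0|+k-1\le(\ell+1)|V_0|+\ell$.

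Even the trivial bound $d^+(v_{2\ell},V_0)\le |V_0|$ suffices. It costs $|V_0|$ against your slack of $|V_0|+1$, and the count still ends in $k-1\le k-2$.

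If you want to keep the exclusion, it is true, but it needs a degree argument. Suppose $v_{2\ell}x\in E(G)$ and pick $x'\in V_0\setminus\{x\}$. Then $x'v_1\dots v_{2\ell}x$ is an antipath of maximum length $2\ell+1$ whose last vertex $x$ is entered by $v_{2\ell}$. Any in-neighbour of $x$ outside $\{x',v_1,\dots,v_{2\ell}\}$ would extend it, and $x'x\notin E(G)$ by Lemma~\ref{lem:property}\ref{itm:property:a}. Hence $N^-(x)\subseteq V'$. Also $N^+(x)\subseteq V'$ by the same lemma. Since $G$ is oriented, $N^+(x)$ and $N^-(x)$ are disjoint, so $2k\le d^+(x)+d^-(x)\le |V'|=2\ell$. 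This contradicts $\ell\le k-2$.
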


\begin{proofclaim}
Let $U = N^+(v_{2\ell}) \setminus (V'\cup V_0)$.
Since $V_0v_1 \dots v_{2\ell} U$ is an antipath blow-up, we may assume that $|U| \le 1$ or else we are done.
Since $d^+(v_{2\ell})>0$, we have $d^+(v_{2\ell})\ge k$.
Thus,
\begin{align*}
d^+(v_{2\ell}, V'\cup V_0) =  d^+(v_{2\ell})-|U| \ge k-1.
\end{align*}
For $i \in [\ell-1]$, let
\begin{align*}
	 \phi(i) = e(V_0, \{v_{2i},v_{2i+1}\}) + d^+(v_{2\ell}, \{v_{2i-1},v_{2i}\}).
\end{align*}
Suppose to the contrary that the claim is false, then $\phi(i) \le |V_0|+1$ for all $i \in [\ell-1]$.
By Lemma~\ref{lem:property}\ref{itm:property:a}, $N^+(V_0) \subseteq V'$.
For all $x\in V_0$, since $xv_1\in E(G)$, we have $d^+(x)\ge k$.
Hence
\begin{align*}
	k |V_0| +  k-1 & \le \sum_{x \in V_0} d^+(x,V') + d^+(v_{2\ell}, V'\cup V_0) = e(V_0,V') + e(v_{2\ell},  V'\cup V_0)\\
	& = e(V_0,v_1) + ( e(V_0,v_{2\ell})+e(v_{2\ell},V_0)) + e(v_{2\ell}, v_{2\ell-1})  + \sum_{i \in [\ell-1]} \phi(i) \\
	&\le |V_0|+|V_0| + 1 + (\ell-1) (|V_0|+1) = (\ell+1) |V_0| +\ell,
\end{align*}
where $e(V_0,v_{2\ell})+e(v_{2\ell},V_0)\le |V_0|$ since $G$ is an oriented graph.
We obtain a contradiction as $\ell \le k-2$.
\end{proofclaim}

For $i \in [\ell-1]$, let $\phi^+(i) = e(V_0, \{v_{2i},v_{2i+1}\})$.
Let
\begin{align*}
	I &= \{i \colon \text{$i$ is $V_0$-surplus and $\phi^+(i)+\phi^+(i-1) \ge 2|V_0|+1$} \}
	\qquad \text{ and } \qquad
	I^- = \{ i-1 \colon i \in I\}.
\end{align*}
We now bound $|I|$ from below.
By Lemma~\ref{lem:property}\ref{itm:property:c} and~\ref{itm:property:e}, we have $I \subseteq [2,\ell-1]$ and $I \cap I^- = \emptyset$, respectively.
By Lemma~\ref{lem:property}\ref{itm:property:d}, we have for all $i \in I$,
\begin{align}
	\phi^+(i)+ \phi^+(i-1) =2 |V_0| +1. \label{eqn:I}
\end{align}
Let $I_0$ be the set of $i \in [\ell-1] \setminus I$ that are $V_0$-surplus.
For each $i \in I_0$, $\phi^+(i)+ \phi^+(i-1) \le 2 |V_0|$.
By Lemma~\ref{lem:property}\ref{itm:property:e}, $I \cup I_0$ does not contain two consecutive numbers.
Let $I^-_0 = \{ i-1 \colon i \in I_0\}$.
For all $i\in[\ell-1]\setminus (I\cup I^-\cup I_0\cup I_0^-)$, $i$ is not $V_0$-surplus and so $\phi^+(i)\le |V_0|$.
Therefore we have
\begin{align}
	\sum_{i \in [\ell-1]} \phi^+(i)
	& = \sum_{i \in [\ell-1] \setminus (I\cup I^- \cup I_0 \cup I_0^-)} \phi^+(i)  + \sum_{i \in I_0} \left( \phi^+(i)  +\phi^+(i-1) \right)+  \sum_{i \in I} \left( \phi^+(i)  +\phi^+(i-1) \right)\nonumber\\
	&\overset{\mathclap{\text{\eqref{eqn:I}}}}{\le}  (\ell -1 - 2|I| - 2|I_0|) |V_0|+ |I_0| \cdot 2  |V_0| +|I|(2|V_0|+1)= (\ell-1)|V_0|+|I|.
	\label{eqn:I2}
\end{align}
By Lemma~\ref{lem:property}\ref{itm:property:a}, $N^+(V_0) \subseteq V'$.
Therefore
\begin{align}
	k |V_0| & \le \sum_{x \in V_0} d^+(x) = e(V_0,V')
	\le e(V_0,v_1) + e(V_0,v_{2\ell})+ \sum_{i \in [\ell-1]} \phi^+(i)  \nonumber \\
		& \overset{\mathclap{\text{\eqref{eqn:I2}}}}{\le} 2|V_0| + (\ell-1)|V_0|+|I|= (\ell+1) |V_0| + |I| \le (k-1)|V_0| + |I|. \nonumber
\end{align}
Therefore we have
\begin{align}
	|I| \ge |V_0| \label{eqn:|I|}.
\end{align}

The next claim helps us identify the structure of an excess index.

\begin{claim} \label{clm:peven}
For all $j \in [\ell-1]$, we have $ v_{2\ell} v_{2j} \notin E(G)$ or $v_{2j+1 } \notin N^+(V_0)$.
\end{claim}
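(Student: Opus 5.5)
The plan is to argue by contradiction. Suppose there is $j\in[\ell-1]$ with $v_{2\ell}v_{2j}\in E(G)$ and $xv_{2j+1}\in E(G)$ for some $x\in V_0$. The first step is to reroute the antipath through the chord $v_{2\ell}v_{2j}$. Since $v_{2\ell}$ is a source on the path, it can take both $v_{2\ell-1}$ and $v_{2j}$ as out-neighbours. This gives the antipath
\[
v_{2j}\, v_{2\ell}\, v_{2\ell-1} \dots v_{2j+1}\, x\, v_1\, v_2 \dots v_{2j-1}.
\]
Its edges all go from even-indexed vertices or $x$ to odd-indexed vertices or $v_{2j}$. It has $2\ell+1$ vertices, and both of its ends $v_{2j}$ and $v_{2j-1}$ are sinks. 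Any in-neighbour $z$ of $v_{2j}$ outside the vertex set $\{x\}\cup V'$ yields an antipath $z v_{2j} v_{2\ell}\dots v_{2j+1} x v_1\dots v_{2j-1}$ of length $2\ell+1$, which is a longest antipath. Letting $W=N^-(v_{2j})\setminus(V'\cup\{x\})$, the sequence $W v_{2j} v_{2\ell}\dots v_{2j+1} x v_1 \dots v_{2j-1}$ is then an antipath blow-up of length $2\ell+1$ whose fat end is $W$.

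The second step is to compare $|W|$ with $|V_0|$ using the maximality of $|V_0|$. Since $v_{2\ell}v_{2j}\in E(G)$, we have $d^-(v_{2j})\ge k$. I would bound the in-neighbours of $v_{2j}$ inside $V'\cup V_0\cup\{v_{2\ell+1}\}$ in the following way.

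In $V_0$: by Lemma~\ref{lem:property}\ref{itm:property:b} applied with $x$ and any $x'\in V_0\setminus\{x\}$, at most one vertex of $V_0$ (namely $x$ itself, or a single $x'$ if $xv_{2j}\notin E(G)$) sends an edge to $v_{2j}$, so $d^-(v_{2j},V_0)\le 1$.

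In $V'$: since $G$ is oriented, $d^-(v_{2j},V')\le 2\ell-1-d^+(v_{2j},V')$. Moreover $v_{2j}$ has positive out-degree, so $d^+(v_{2j})\ge k$, and its out-neighbours outside $V'$ can be controlled by applying Lemma~\ref{lem:property}\ref{itm:property:a} to the new blow-up. Indeed, $N^+(W)$ lies inside that blow-up's internal vertices. Symmetrically, any out-neighbour of $v_{2j}$ outside would either extend a longest antipath or close an anticycle of length $2\ell+2$, and the latter is excluded by Lemma~\ref{lem:cycle}.

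Combining these gives roughly $|W|\ge k-1-(2\ell-1-k)=2(k-\ell)\ge 4$. If also $|W|\ge|V_0|+1$, maximality of $|V_0|$ is contradicted.

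The main obstacle is that $|V_0|$ can be large, so the crude count above need not beat $|V_0|$. I would handle this in two ways.

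First, I would use the freedom at the other end. The vertex $v_{2j-1}$ is a sink end, and $V_0\setminus\{x\}$ sends edges to $v_1$. So I would try to reroute instead as
\[
v_{2j-1}\dots v_1\, x'\, \cdots,
\]
so that the whole of $V_0\setminus\{x\}$ lands in the new fat end. This would give a blow-up whose fat end contains $(V_0\setminus\{x\})\cup W'$ for a nonempty set $W'$ of additional vertices, again contradicting maximality.

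Second, if $j$ or $j\pm1$ is $V_0$-surplus, I would use Lemma~\ref{lem:property}\ref{itm:property:c},\ref{itm:property:g},\ref{itm:property:i} to sharpen the bound. In particular, Lemma~\ref{lem:property}\ref{itm:property:g} already forbids $v_{2j}v_{2i}$ for $V_0$-surplus $i$, and Lemma~\ref{lem:property}\ref{itm:property:i} provides many outside in-neighbours.

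I expect the case analysis, depending on whether $x$ or some other vertex of $V_0$ hits $v_{2j}$, to be where the real work lies. The first reroute should be tried first.
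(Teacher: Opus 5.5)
\textbf{There is a genuine gap: the count that must beat $|V_0|$ is never supplied.}

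Your setup matches the paper's. The contradiction hypothesis, the final blow-up $W v_{2j} v_{2\ell}\dots v_{2j+1}\,x\,v_1\dots v_{2j-1}$, and the aim of contradicting the maximality of $|V_0|$ are exactly what the paper uses. As you note yourself, the local count only gives $|W|\ge 2(k-\ell)$, which says nothing when $|V_0|$ is large. Neither of your remedies closes this.

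\emph{The first remedy.} The fat end of a blow-up is a set of common in- (or out-) neighbours of one vertex of the path. The vertices of $V_0\setminus\{x\}$ point to $v_1$, while those of $W$ point to $v_{2j}$. So no rerouting places $(V_0\setminus\{x\})\cup W'$ in a single end.

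\emph{The second remedy.} It is too local. Lemma~\ref{lem:property}\ref{itm:property:i} counts in-neighbours of $v_{2i}$ for surplus $i$, not of $v_{2j}$. Looking only at $j$ and $j\pm1$ cannot produce a bound that grows with $|V_0|$.

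\textbf{The missing ingredient is global.} Just before the claim the paper proves $|I|\ge |V_0|$. Here $I$ is the set of $V_0$-surplus $i$ with $\phi^+(i)+\phi^+(i-1)\ge 2|V_0|+1$. The bound comes from double counting $k|V_0|\le e(V_0,V')\le (\ell+1)|V_0|+|I|$, using Lemma~\ref{lem:property}\ref{itm:property:a}, \ref{itm:property:c}, \ref{itm:property:d} and \ref{itm:property:e}, together with $\ell\le k-2$.

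Write $V_{2I}=\{v_{2i}\colon i\in I\}$. The hypothesis $v_{2j+1}\in N^+(V_0)$ is exactly what is needed for two facts:
\begin{itemize}
\item Lemma~\ref{lem:property}\ref{itm:property:g} gives $N^+(v_{2j})\cap V_{2I}=\emptyset$;
\item Lemma~\ref{lem:property}\ref{itm:property:j} gives $d^-(v_{2j},V_{2I})\le 1$.
\end{itemize}
So the $|I|$ vertices of $V_{2I}$ are essentially unavailable to $v_{2j}$ as neighbours. Combined with $N^+(v_{2j})\subseteq \{x\}\cup V'$, this yields $|W|\ge k-(2\ell+1-|I|-k)-1=|I|+2(k-\ell)-2\ge |V_0|+2$.

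\textbf{Two smaller points.}
\begin{itemize}
\item To prove $N^+(v_{2j})\subseteq\{x\}\cup V'$ you need the antipath $v_{2j}v_{2j-1}\dots v_1\,x\,v_{2j+1}\dots v_{2\ell+1}$, in which $v_{2j}$ is a source end. In your rerouted path $v_{2j}$ is a sink end, so an out-neighbour of $v_{2j}$ cannot extend it.
\item The bound $d^-(v_{2j},V_0)\le 1$ does not follow from Lemma~\ref{lem:property}\ref{itm:property:b} when $xv_{2j}\notin E(G)$. You do not need it, though, since vertices of $V_0\setminus\{x\}$ may legitimately lie in the new fat end.
\end{itemize}
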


\begin{proofclaim}
Suppose to the contrary that there exist $j \in [\ell-1]$ and $x \in V_0$ such that $v_{2\ell} v_{2j}, x v_{2j+1 } \in E(G)$.
Let $W =\{x\} \cup V'$.
Note that $v_{2j} \dots v_1 x v_{2j+1} \dots v_{2\ell+1}$ is an antipath of length~$2 \ell +1$.
We claim that $N^+(v_{2j})\subseteq W$.
Let $y\in N^+(v_{2j})\setminus W$. If $y\ne v_{2\ell+1}$, then $yv_{2j}\dots v_1xv_{2j+1}\dots v_{2\ell+1}$ is an antipath of length $2\ell+2$, a contradiction.
If $y=v_{2\ell+1}$, then the edge $v_{2j}v_{2\ell+1}$ together with the antipath $v_{2j}\dots v_1xv_{2j+1}\dots v_{2\ell+1}$ forms an anticycle of length $2\ell+2$, again a contradiction by Lemma~\ref{lem:cycle}.
Thus $N^+(v_{2j})\subseteq W$.

Let $V_{2I}=\{ v_{2i} \colon i \in I \}$.
Recall that each $i \in I$ is $V_0$-surplus and $v_{2j+1}\in N^+(V_0)$.
By Lemma~\ref{lem:property}\ref{itm:property:g}, $N^+ (v_{2j}) \cap V_{2I} = \emptyset$.
Thus
\begin{align*}
	d^+(v_{2j} , W \setminus V_{2I})  = d^+(v_{2j},W) = 	d^+(v_{2j}) \ge k.
\end{align*}
Since $v_{2\ell}v_{2j}\in E(G)$ and $\pd(G)\ge k$, we have $d^-(v_{2j})\ge k$.
By Lemma~\ref{lem:property}\ref{itm:property:j}, we have
\begin{align*}
		d^- ( v_{2j},  V_{2I} ) \le 1.
\end{align*}
Let $U = N^-(v_{2j}) \setminus W$.
Note that
\begin{align*}
		|U| & = d^-(v_{2j}) - d^-(v_{2j} , W \setminus V_{2I})  - d^-(v_{2j},  V_{2I})
		 \ge k - \left( |W \setminus V_{2I}| - d^+(v_{2j} , W \setminus V_{2I})  \right)- 1
		\\
		& \ge	k-(2\ell+1 -|I| -  k ) -1 = |I|+2(k-\ell)-2 \ge |I|+2  \overset{\mathclap{\text{\eqref{eqn:|I|}}}}{\ge} |V_0|+2.
\end{align*}
Note that $U v_{2j} v_{2\ell} \dots v_{2j+1} x v_1 \dots v_{2j-1}$ is an antipath blow-up of length~$2 \ell +1$ with $|U| > |V_0|$ (see Figure~\ref{fig:peven}), contradicting the maximality of~$|V_0|$.
\begin{figure}
	\centering
		\begin{tikzpicture}[scale=0.7]

					\foreach \x/\y in { 3/x}
					{
					\filldraw[black] (\x,1.5) circle (3pt);
					\node at (\x,1.9) {${\y}$};
					}
					
					\draw[line width =1.5pt] (6,1.5) ellipse (1cm and 0.5cm);
					\node at (6,1.5) {$U$};

				\foreach \x/\y in { 1/1,5/2j-1,6/2j,7/2j+1,11/2\ell}
					{
					\filldraw[black] (\x,0) circle (3pt);
					\node at (\x,-0.4) {$v_{\y}$};
					}
				\foreach \x/\y in {6/5,6/7}
					{		\draw[->-,thin] (\x,0)--(\y,0);		}
					
				\begin{scope}[line width =1.5pt]
					\draw[->-] (5,1.5)--(6,0);
					\draw[->-] (7,1.5)--(6,0);
					\draw[->-] (3,1.5)--(1,0);
					\draw[->-] (3,1.5)--(7,0);
					\draw (1,0)--(4,0);
					\draw (7,0)--(11,0);
					\draw[bend right, ->-] (11,0) to (6,0);
					\foreach \x/\y in {2/1,8/7,11/10,4/5}
					{		\draw[->-] (\x,0)--(\y,0);		}	
				\end{scope}
	\end{tikzpicture}
	\caption{antipath blow-up considered in Claim~\ref{clm:peven}.}		
	\label{fig:peven}
\end{figure}
\end{proofclaim}

Fix any excess $i_0 \in [\ell-1]$, which exists by Claim~\ref{clm:i0}.

Suppose that $i_0$ is $V_0$-surplus.
By Lemma~\ref{lem:property}\ref{itm:property:c} and~\ref{itm:property:i}, there exist distinct $x_{i_0} ,x \in V_0$ and $y \in N^-(v_{2i_0})  \setminus (V' \cup V_0)$ such that $x_{i_0} v_{2 i_0},  xv_{2i_0+1} \in E(G)$.
Since $i_0$ is excess, we have $e(V_0,\{v_{2i_0},v_{2i_0+1}\})+d^+(v_{2\ell},\{v_{2i_0-1},v_{2i_0}\}) \ge |V_0|+2$.
On the other hand, Lemma~\ref{lem:property}\ref{itm:property:c} implies that $e(V_0,\{v_{2i_0},v_{2i_0+1}\})=|V_0|+1$.
Hence $d^+(v_{2\ell},\{v_{2i_0-1},v_{2i_0}\}) \ge 1$.
Since $v_{2i_0+1}\in N^+(V_0)$, Claim~\ref{clm:peven} applied with $j=i_0$ implies that $v_{2\ell}v_{2i_0}\notin E(G)$.
Therefore $v_{2\ell}v_{2i_0-1}\in E(G)$.
However, $yv_{2i_0}x_{i_0}v_1\dots v_{2i_0-1}v_{2\ell}\dots v_{2i_0+1}x$
is an antipath of length $2\ell+2$ (see Figure~\ref{fig:i_0=i}), a contradiction. Hence $i_0$ is not $V_0$-surplus.

\begin{figure}
	\centering
		\begin{tikzpicture}[scale=0.7]

					\foreach \x/\y in { 3/x_{i_0}, 6/y,7/x}
					{
					\filldraw[black] (\x,1.5) circle (3pt);
					\node at (\x,1.9) {${\y}$};
					}
			
				\foreach \x/\y in { 1/1,5/2i_0-1,6/2i_0,7/2i_0+1,11/2\ell}
					{
					\filldraw[black] (\x,0) circle (3pt);
					\node at (\x,-0.4) {$v_{\y}$};
					}
				\foreach \x/\y in {6/5,6/7}
					{		\draw[->-,thin] (\x,0)--(\y,0);		}
					
				\begin{scope}[line width =1.5pt]
					\draw[->-] (7,1.5)--(7,0);
					\draw[->-] (6,1.5)--(6,0);
					\draw[->-] (3,1.5)--(1,0);
					\draw[->-] (3,1.5)--(6,0);
					\draw (7,0)--(11,0);
					\draw (1,0)--(5,0);
					\draw[bend right=24, ->-] (11,0) to (5,0);
					\foreach \x/\y in {2/1,4/5,8/7,11/10}
					{		\draw[->-] (\x,0)--(\y,0);		}	
				\end{scope}
	\end{tikzpicture}
	\caption{if $i_0$ is $V_0$-surplus.}		
	\label{fig:i_0=i}
\end{figure}

Since $i_0$ is not $V_0$-surplus, we have $e(V_0,\{v_{2i_0},v_{2i_0+1}\})\le |V_0|$.
As $i_0$ is excess and $d^+(v_{2\ell},\{v_{2i_0-1},v_{2i_0}\})\le2$,
we must have $e(V_0,\{v_{2i_0},v_{2i_0+1}\})=|V_0|$  and $d^+(v_{2\ell},\{v_{2i_0-1},v_{2i_0}\})=2$. Together with Claim~\ref{clm:peven}, we deduce that
\begin{align*}
	v_{2\ell}v_{2i_0-1}, v_{2\ell}v_{2i_0} & \in E(G), &
	v_{2i_0+1} &\notin N^+(V_0), &
	V_0 \subseteq N^-(v_{2i_0}).
\end{align*}

\begin{claim} \label{clm:podd}
There exists $i \in I$ such that $v_{2i_0}v_{2i-2} \in E(G)$ or $v_{2i-2}v_{2i_0} \in E(G)$.
\end{claim}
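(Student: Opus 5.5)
We argue by contradiction: assume $v_{2i_0}$ is non-adjacent (in both directions) to $v_{2i-2}$ for every $i\in I$. We will show that $v_{2i_0}$ then has at least two neighbours outside $V'\cup V_0$ on one side, and either side gives a contradiction. Throughout we use the facts just derived: $v_{2\ell}v_{2i_0-1},\,v_{2\ell}v_{2i_0}\in E(G)$, $V_0\subseteq N^-(v_{2i_0})$, and $v_{2i_0+1}\notin N^+(V_0)$.

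\emph{Step 1: two extension constructions.} Write $U^+=N^+(v_{2i_0})\setminus(V'\cup V_0)$ and $U^-=N^-(v_{2i_0})\setminus(V'\cup V_0)$.

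If $|U^+|\ge 2$, then
\[
V_0v_1\dots v_{2i_0-1}v_{2\ell}v_{2\ell-1}\dots v_{2i_0+1}v_{2i_0}U^+
\]
is an antipath blow-up of length $2\ell+1$. It uses $V_0\to v_1$, $v_{2\ell}\to v_{2i_0-1}$ and $v_{2i_0}\to v_{2i_0+1}$, and $v_{2i_0}$ sends to all of $U^+$. Both ends have size at least $2$, so Lemma~\ref{lem:twofatends} follows directly.

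If $|U^-|\ge 2$, fix $x\in V_0$ and set $U=(V_0\setminus\{x\})\cup U^-$. Then
\[
U\,v_{2i_0}\,x\,v_1\dots v_{2i_0-1}\,v_{2\ell}\,v_{2\ell-1}\dots v_{2i_0+1}
\]
is an antipath blow-up of length $2\ell+1$. The edge checks are:
\begin{itemize}
\item $U\to v_{2i_0}$ and $x\to v_{2i_0}$;
\item $x\to v_1$;
\item $v_{2i_0-2}\to v_{2i_0-1}$ and $v_{2\ell}\to v_{2i_0-1}$;
\item along the segment from $v_{2\ell}$ down to $v_{2i_0+1}$, the edges are directed from even to odd indices.
\end{itemize}
Its first part has size $|V_0|-1+|U^-|\ge|V_0|+1$, contradicting the maximality of $|V_0|$. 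Hence we may assume $|U^+|,|U^-|\le 1$.

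\emph{Step 2: degree count.} Since $v_{2i_0}v_{2i_0-1}\in E(G)$ and $V_0\subseteq N^-(v_{2i_0})$, the condition $\pd(G)\ge k$ gives $d^+(v_{2i_0}),d^-(v_{2i_0})\ge k$. Each vertex of $V'\setminus\{v_{2i_0}\}$ contributes at most $1$ to $d^+(v_{2i_0},V')+d^-(v_{2i_0},V')$, because $G$ is oriented. Under our assumption, every $v_{2i-2}$ with $i\in I$ contributes $0$.

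These vertices $v_{2i-2}$ are distinct for distinct $i\in I$. At most one of them can equal $v_{2i_0}$, namely when $i_0+1\in I$; note that $i_0\notin I$ because $i_0$ is not $V_0$-surplus. So at least $|I|-1$ of them are distinct vertices of $V'\setminus\{v_{2i_0}\}$ contributing nothing. The vertices of $V_0$ contribute at most $|V_0|$ in total, all as in-neighbours. Therefore
\[
|U^+|+|U^-|\ \ge\ 2k-\bigl(2\ell-1-(|I|-1)\bigr)-|V_0|\ =\ 2k-2\ell+|I|-|V_0|\ \ge\ 2k-2\ell\ \ge\ 4,
\]
where we used \eqref{eqn:|I|} and $\ell\le k-2$. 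This contradicts $|U^+|,|U^-|\le 1$, which proves the claim.

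\emph{Main obstacle.} The delicate point is Step 1: verifying that both rerouted sequences really are antipath blow-ups. This means checking every edge orientation and confirming that the endpoint sets are disjoint from the vertices used; in particular, $U^+$ must avoid $V_0$, and $U$ must avoid $x$ and $V'$. A second point needing care in Step 2 is the possible coincidence $v_{2i-2}=v_{2i_0}$. If the count turns out tighter than expected, I would also bring in Lemma~\ref{lem:property}\ref{itm:property:g}--\ref{itm:property:j}, with $j=i_0$, to exclude further adjacencies.
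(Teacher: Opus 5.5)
Your proof is correct and follows the paper's argument. It uses the same two rerouted blow-ups, $V_0v_1\dots v_{2i_0-1}v_{2\ell}\dots v_{2i_0}U^+$ and $U\,v_{2i_0}\,x\,v_1\dots v_{2i_0-1}v_{2\ell}\dots v_{2i_0+1}$ with $U=(V_0\setminus\{x\})\cup U^-$, together with the same degree count at $v_{2i_0}$ based on non-adjacency to $\{v_{2i-2}\colon i\in I\}$ and $|I|\ge|V_0|$. The only cosmetic differences are that you bound $|U^+|+|U^-|$ jointly through $d^+(v_{2i_0})+d^-(v_{2i_0})$ and explicitly handle the possible coincidence $v_{2i-2}=v_{2i_0}$, whereas the paper first uses $|W^+|\le 1$ and then lower-bounds $|W^-|$ alone.
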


\begin{proofclaim}
Let $V_{2I-2} = \{ v_{2i-2} \colon i \in I\}$.
Suppose to the contrary that the claim is false,  so
\begin{align}
	(N^+(v_{2i_0}) \cup N^-(v_{2i_0}) )  \cap V_{2I-2}  = \emptyset. \label{eqn:podd}
\end{align}
Let $W^+ = N^+(v_{2 i_0 }) \setminus V'$  and $W^- = N^-(v_{2 i_0}) \setminus V'$.
Recall that $V_0 \subseteq N^-(v_{2i_0})$, so $V_0 \cap W^+ = \emptyset$.
If $|W^+| \ge 2$, then $V_0 v_1 \dots v_{2i_0-1}v_{2\ell} \dots v_{2i_0} W^+$ is a desired antipath blow-up (see Figure~\ref{fig:podd:1}).
Hence we have $|W^+| \le 1$ and so by~\eqref{eqn:podd}
\begin{align*}
d^+(v_{2i_0}, V' \setminus V_{2I-2}) = d^+(v_{2i_0}, V') \ge d^+(v_{2i_0}) - |W^+| \ge k-1.
\end{align*}
Hence
\begin{align*}
	|W^-| & = d^-(v_{2i_0}) - d^-(v_{2i_0}, V' \setminus V_{2I-2}) - d^-(v_{2i_0}, V_{2I-2})\\
	& \overset{\mathclap{\text{\eqref{eqn:podd}}}}{\ge}  k - \left( |V' \setminus V_{2I-2}| - d^+(v_{2i_0}, V' \setminus V_{2I-2}) \right) - 0\\	& \ge  k - \left( 2 \ell - |I|  - (k-1) \right)
	= |I|+2(k-\ell)-1 \ge |I|  +3  \overset{\mathclap{\text{\eqref{eqn:|I|}}}}{\ge} |V_0|+3.
\end{align*}
Let $ x \in V_0$ and $W = W^- \setminus \{x\}$, so $|W| \ge |V_0|+2$.
Then $W v_{2i_0} x v_1 \dots v_{2i_0-1} v_{2\ell } \dots v_{2i_0+1}$ is an antipath blow-up of length $2 \ell +1$ (see Figure~\ref{fig:podd:2}), contradicting the maximality of~$|V_0|$.
\begin{figure}
	\centering
		\begin{subfigure}{0.475\textwidth}
			\centering
		\begin{tikzpicture}[scale=0.7]

					\draw[line width =1.5pt] (6,1.5) ellipse (1cm and 0.5cm);
					\node at (6,1.5) {$W^+$};
					
					\draw[line width =1.5pt] (1,1.5) ellipse (1cm and 0.5cm);
					\node at (1,1.5) {$V_0$};

				\foreach \x/\y in { 1/1,5/2i_0-1,6/2i_0,11/2\ell}
					{
					\filldraw[black] (\x,0) circle (3pt);
					}
\node at (1,-0.5) {$v_1$};
\node at (4.9,-0.5) {$v_{2i_0-1}$};
\node at (6.1,-0.5) {$v_{2i_0}$};
\node at (11,-0.5) {$v_{2\ell}$};
				\foreach \x/\y in {6/5,6/7}
					{		\draw[->-,thin] (\x,0)--(\y,0);		}
					
				\begin{scope}[line width =1.5pt]
					\draw[->-] (6,0)--(5,1.5);
					\draw[->-] (6,0)--(7,1.5);
					\draw[->-] (0,1.5)--(1,0);
					\draw[->-] (2,1.5)--(1,0);
					\draw (1,0)--(5,0);
					\draw (6,0)--(11,0);
					\draw[bend right=20, ->-] (11,0) to (5,0);
					\foreach \x/\y in {2/1,4/5,11/10,6/7}
					{		\draw[->-] (\x,0)--(\y,0);		}	
				\end{scope}
	\end{tikzpicture}
	\caption{if $|W^+|\ge 2$.}
		\label{fig:podd:1}
	\end{subfigure}
		\begin{subfigure}{0.475\textwidth}
			\centering
		\begin{tikzpicture}[scale=0.7]

					\draw[line width =1.5pt] (6,1.5) ellipse (1cm and 0.5cm);
					\node at (6,1.5) {$W$};
					
					\foreach \x/\y in { 3/x}
					{
					\filldraw[black] (\x,1.5) circle (3pt);
					\node at (\x,1.9) {${\y}$};
					}
								
				\foreach \x/\y in { 1/1,5/2i_0-1,6/2i_0,7/2i_0+1,11/2\ell}
					{
					\filldraw[black] (\x,0) circle (3pt);
					}
\node at (1,-0.5) {$v_1$};
\node at (4.8,-0.5) {$v_{2i_0-1}$};
\node at (6,-0.5) {$v_{2i_0}$};
\node at (7.2,-0.5) {$v_{2i_0+1}$};
\node at (11,-0.5) {$v_{2\ell}$};
				\foreach \x/\y in {6/5,6/7}
					{		\draw[->-,thin] (\x,0)--(\y,0);		}
					
				\begin{scope}[line width =1.5pt]
					\draw[->-] (5,1.5)--(6,0);
					\draw[->-] (7,1.5)--(6,0);
					\draw[->-] (3,1.5)--(1,0);
					\draw[->-] (3,1.5)--(6,0);
					\draw (1,0)--(5,0);
					\draw (7,0)--(11,0);
					\draw[bend right=25, ->-] (11,0) to (5,0);
					\foreach \x/\y in {2/1,4/5,11/10,8/7}
					{		\draw[->-] (\x,0)--(\y,0);		}	
				\end{scope}
	\end{tikzpicture}
	\caption{if $|W^+|\le 1$.}
	\label{fig:podd:2}
	\end{subfigure}
		\caption{antipath blow-ups considered in Claim~\ref{clm:podd}.}
	\label{fig:podd}
\end{figure}
\end{proofclaim}

Consider any $i \in I$ with $v_{2i_0}v_{2i-2} \in E(G)$ or $v_{2i-2}v_{2i_0} \in E(G)$, which exists by Claim~\ref{clm:podd}. Since $G$ has no loops, $v_{2i-2}\ne v_{2i_0}$, so $i\neq i_0+1$. Since $i_0$ is not $V_0$-surplus while $i\in I$ is $V_0$-surplus, we have $i\ne i_0$.
Hence either $i<i_0$ or $i>i_0+1$.
Recall that $i \in I$, so $i$ is $V_0$-surplus and $\phi^+(i)+\phi^+(i-1)=2|V_0|+1$.
Lemma~\ref{lem:property}\ref{itm:property:c} and~\ref{itm:property:d} imply that there exist distinct $x_i,x \in V_0$ such that
\begin{align*}
x_i v_{2i}, x_i v_{2i+1}, x_i v_{2i-2}, x v_{2i+1}, x v_{2i-2} \in E(G).
\end{align*}
Let $U = N^-(v_{2i})  \setminus (V' \cup V_0)$.
By Lemma~\ref{lem:property}\ref{itm:property:i} and \eqref{eqn:|I|}, we have $|U| \ge |I| +3 \ge |V_0|+3$.
Recall that $v_{2i_0}v_{2i-2} \in E(G)$ or $v_{2i-2}v_{2i_0} \in E(G)$.
Then $G$ contains an antipath blow-up of length~$2 \ell +1$, namely
\begin{align*}
	& U v_{2i} x_i v_{2i+1} \dots v_{2i_0-1} v_{2\ell} \dots v_{2i_0} v_{2i-2} x v_1 \dots v_{2i-3}
		& & \text{if $ v_{2i_0} v_{2i-2}\in E(G)$ and $i <i_0$,}\\
	& U v_{2i} x_i v_{2i+1} \dots v_{2\ell}  v_{2i_0-1} \dots v_{1} x v_{2i-2} v_{2i_0} \dots v_{2i-3}
		& & \text{if $ v_{2i_0}v_{2i-2}\in E(G)$ and $i > i_0+1$,}\\
	& U v_{2i} x_i v_{1} \dots v_{2i-2} v_{2i_0} x v_{2i+1} \dots  v_{2i_0-1} v_{2\ell} \dots v_{2i_0+1}
		& & \text{if $ v_{2i-2}v_{2i_0}\in E(G)$ and  $i <i_0$,}\\
	& U v_{2i} x_i v_{2i+1} \dots v_{2\ell}  v_{2i_0-1} \dots v_1 x v_{2i_0} v_{2i-2} \dots v_{2i_0+1}
		& & \text{if $ v_{2i-2}v_{2i_0}\in E(G)$ and  $i >i_0+1$, }
\end{align*}
each of which contradicts the maximality of~$|V_0|$ (see Figure~\ref{fig:podd2}).
\begin{figure}[t]
	\centering
	\begin{subfigure}{\textwidth}
	\centering
		\begin{tikzpicture}[scale=0.8]

				\draw[line width =1.5pt] (8,1.5) ellipse (1cm and 0.5cm);
				\node at (8,1.5) {$U$};
					
				\foreach \x/\y in { 11/x_i,5/x}
					{
					\filldraw[black] (\x,1.5) circle (3pt);
					\node at (\x,1.9) {${\y}$};
					}
			\filldraw[black] (5,0) circle (3pt);
				\foreach \x/\y in { 2.5/1,5/2i-3,6/2i-2,8/2i,9/2i+1,12/2i_0,15.5/2\ell}
					{
					\filldraw[black] (\x,0) circle (3pt);
					\node at (\x,-0.4) {$v_{\y}$};
					}
					
					\filldraw[black] (7,0) circle (3pt);
					\node at (7,0.4) {$v_{2i-1}$};
					\filldraw[black] (11,0) circle (3pt);
					\node at (11,0.4) {$v_{2i_0-1}$};
					
				\foreach \x in {6,10,8}
					{		\draw[->-,thin] (\x,0)--(\x+1,0);		}
					\foreach \x in {6,8,12}
					{		\draw[->-,thin] (\x,0)--(\x-1,0);		}
					
				\begin{scope}[line width =1.5pt]
					\draw[->-] (7,1.5)--(8,0);
					\draw[->-] (9,1.5)--(8,0);
					\draw[->-] (11,1.5)--(8,0);
					\draw[->-] (11,1.5)--(9,0);
					\draw[->-] (5,1.5)--(6,0);
					\draw[->-] (5,1.5)--(2.5,0);
					\draw (2.5,0)--(5,0);
					\draw (9,0)--(11,0);
					\draw (15.5,0)--(12,0);
					\draw[bend left=23, ->-] (12,0) to (6,0);
					\draw[bend right=20, ->-] (15.5,0) to (11,0);
					\foreach \x/\y in {3.5/2.5,4/5,10/9,10/11,12/13,15.5/14.5}
					{		\draw[->-] (\x,0)--(\y,0);		}	
				\end{scope}
	\end{tikzpicture}
	\caption{if $ v_{2i_0} v_{2i-2}\in E(G)$ and $i <i_0$}
		\label{fig:podd2:1}
	\end{subfigure}
	\begin{subfigure}{\textwidth}
	\centering
		\begin{tikzpicture}[scale=0.8]

				\draw[line width =1.5pt] (12,1.5) ellipse (1cm and 0.5cm);
				\node at (12,1.5) {$U$};
					
				\foreach \x/\y in { 14/x_i,7/x}
					{
					\filldraw[black] (\x,1.5) circle (3pt);
					\node at (\x,1.9) {${\y}$};
					}
				\foreach \x/\y in { 2.5/1,5/2i_0-1,9/2i-3,10/2i-2,11/2i-1,12/2i,13/2i+1,15.5/2\ell}
					{
					\filldraw[black] (\x,0) circle (3pt);
					\node at (\x,-0.4) {$v_{\y}$};
					}

					\filldraw[black] (6,0) circle (3pt);
					\node at (6,0.4) {$v_{2i_0}$};

				\foreach \x in {6,10,8,12}
					{		\draw[->-,thin] (\x,0)--(\x+1,0);		}
					\foreach \x in {6,12,10}
					{		\draw[->-,thin] (\x,0)--(\x-1,0);		}
					
				\begin{scope}[line width =1.5pt]
					\draw[->-] (13,1.5)--(12,0);
					\draw[->-] (11,1.5)--(12,0);
					\draw[->-] (14,1.5)--(12,0);
					\draw[->-] (14,1.5)--(13,0);
					\draw[->-] (7,1.5)--(10,0);
					\draw[->-] (7,1.5)--(2.5,0);
					\draw (2.5,0)--(5,0);
					\draw (15.5,0)--(13,0);
					\draw (6,0)--(9,0);
					\draw[bend left, ->-] (6,0) to (10,0);
					\draw[bend left=20, ->-] (15.5,0) to (5,0);
					\foreach \x/\y in {3.5/2.5,4/5,6/7,8/9,14/13,15.5/14.5}
					{		\draw[->-] (\x,0)--(\y,0);		}	
				\end{scope}
	\end{tikzpicture}
	\caption{if $ v_{2i_0} v_{2i-2}\in E(G)$ and $i > i_0+1$}
		\label{fig:podd2:2}
	\end{subfigure}
	
	\begin{subfigure}{\textwidth}
	\centering
		\begin{tikzpicture}[scale=0.8]

				\draw[line width =1.5pt] (8,1.5) ellipse (1cm and 0.5cm);
				\node at (8,1.5) {$U$};
					
				\foreach \x/\y in { 11/x,6/x_i}
					{
					\filldraw[black] (\x,1.5) circle (3pt);
					\node at (\x,1.9) {${\y}$};
					}
				\foreach \x/\y in { 2.5/1,6/2i-2,8/2i,9/2i+1,12/2i_0,13/2i_0+1,15.5/2\ell}
					{
					\filldraw[black] (\x,0) circle (3pt);
					\node at (\x,-0.4) {$v_{\y}$};
					}
					
					\filldraw[black] (7,0) circle (3pt);
					\node at (7,0.3) {$v_{2i-1}$};
					\filldraw[black] (11,0) circle (3pt);
					\node at (10.8,0.3) {$v_{2i_0-1}$};

				\foreach \x in {6,10,8,12}
					{		\draw[->-,thin] (\x,0)--(\x+1,0);		}
					\foreach \x in {6,8,12}
					{		\draw[->-,thin] (\x,0)--(\x-1,0);		}
					
				\begin{scope}[line width =1.5pt]
					\draw[->-] (7,1.5)--(8,0);
					\draw[->-] (9,1.5)--(8,0);
					\draw[->-] (6,1.5)--(8,0);
					\draw[->-] (6,1.5)--(2.5,0);
					\draw[->-] (11,1.5)--(12,0);
					\draw[->-] (11,1.5)--(9,0);
					\draw (2.5,0)--(5,0);
					\draw (9,0)--(11,0);
					\draw (15.5,0)--(13,0);
					\draw[bend right=25, ->-] (6,0) to (12,0);
					\draw[bend right=22, ->-] (15.5,0) to (11,0);
					\foreach \x/\y in {3.5/2.5,6/5,10/9,10/11,14/13,15.5/14.5}
					{		\draw[->-] (\x,0)--(\y,0);		}	
				\end{scope}
	\end{tikzpicture}
	\caption{if $ v_{2i-2} v_{2i_0}\in E(G)$ and $i <i_0$}
		\label{fig:podd2:3}
	\end{subfigure}
	\begin{subfigure}{\textwidth}
	\centering
		\begin{tikzpicture}[scale=0.8]

				\draw[line width =1.5pt] (12,1.5) ellipse (1cm and 0.5cm);
				\node at (12,1.5) {$U$};
					
				\foreach \x/\y in { 14/x_i,4/x}
					{
					\filldraw[black] (\x,1.5) circle (3pt);
					\node at (\x,1.9) {${\y}$};
					}
				\foreach \x/\y in { 2.5/1,5/2i_0-1,7/2i_0+1,10/2i-2,11/2i-1,12/2i,13/2i+1,15.5/2\ell}
					{
					\filldraw[black] (\x,0) circle (3pt);
					\node at (\x,-0.4) {$v_{\y}$};
					}
					
					\filldraw[black] (6,0) circle (3pt);
					\node at (6,0.4) {$v_{2i_0}$};

				\foreach \x in {6,10,12}
					{		\draw[->-,thin] (\x,0)--(\x+1,0);		}
					\foreach \x in {6,12,10}
					{		\draw[->-,thin] (\x,0)--(\x-1,0);		}
					
				\begin{scope}[line width =1.5pt]
					\draw[->-] (13,1.5)--(12,0);
					\draw[->-] (11,1.5)--(12,0);
					\draw[->-] (14,1.5)--(12,0);
					\draw[->-] (14,1.5)--(13,0);
					\draw[->-] (4,1.5)--(6,0);
					\draw[->-] (4,1.5)--(2.5,0);
					\draw (2.5,0)--(5,0);
					\draw (15.5,0)--(13,0);
					\draw (7,0)--(10,0);
					\draw[bend right, ->-] (10,0) to (6,0);
					\draw[bend left=20, ->-] (15.5,0) to (5,0);
					\foreach \x/\y in {3.5/2.5,4/5,8/7, 10/9,14/13,15.5/14.5}
					{		\draw[->-] (\x,0)--(\y,0);		}	
				\end{scope}
	\end{tikzpicture}
	\caption{if $ v_{2i-2}v_{2i_0} \in E(G)$ and $i > i_0+1$}
		\label{fig:podd2:4}
	\end{subfigure}
	\caption{antipath blow-ups considered in the proof of Lemma~\ref{lem:twofatends}.}
	\label{fig:podd2}
\end{figure}

The proof of the lemma is completed.
\end{proof}

\section{Concluding remarks}

The minimum pseudo-semidegree analogue of Conjecture~\ref{conj:Stein} is false for other oriented paths.
Indeed, any oriented path that is not antidirected contains a directed path of length~$2$.
Consider a complete bipartite graph $G[A,B]$ in which every edge is directed from $A$ to $B$.
Then $\pd(G)=\min\{|A|,|B|\}$, which can be arbitrarily large, but $G$ does not contain any directed path of length~$2$.

Conjecture~\ref{conj:Stein} is now known to hold for antipaths by Theorem~\ref{thm:pseudo}, directed paths by Jackson~\cite{J1981}, and paths with one change in direction by Chen, Hou, and Zhou~\cite{CHZ2025-EJC}.
All these proofs implement some form of ``rotation and extension'' argument.
However, we are unsure how to do so for arbitrary oriented paths.

\noindent
\textbf{Data availability statement.}
There are no additional data beyond that contained within the
main manuscript.

\bibliographystyle{abbrv}
\bibliography{antipath}

\end{document}